\documentclass[11pt,a4paper]{amsart}
\usepackage[T1]{fontenc}
\usepackage{iftex}
\ifPDFTeX
\usepackage[utf8]{inputenc}
\fi
\usepackage[english]{babel}
\usepackage[margin=30mm]{geometry}
\usepackage{amsmath,amssymb,amsfonts,mathtools,mathrsfs,amscd}
\usepackage{xcolor}
\usepackage[hidelinks]{hyperref}
\numberwithin{equation}{section}
\theoremstyle{plain}
\newtheorem{theorem}{Theorem}[section]
\newtheorem{lemma}[theorem]{Lemma}
\newtheorem{propos}[theorem]{Proposition}
\newtheorem{corollary}[theorem]{Corollary}
\theoremstyle{definition}
\newtheorem{definition}[theorem]{Definition}
\newtheorem{Cit}[theorem]{Quotation}
\newtheorem{remark}[theorem]{Remark}
\newtheorem{example}[theorem]{Example}

\def\ord{\operatorname{ord}}

\def\C{\mathbb{C}}
\begin{document}
\title{Geometry of holonomic modules over the first Weyl algebra}
\author[M.\,D.~Sulimov]{M.\,D.~Sulimov}
\address{Institute for Information Transmission Problems (Kharkevich Institute), Russian Academy of Sciences; Skolkovo Institute of Science and Technology}
\email{mark.game.sulimov@gmail.com}
\author[A.\,V.~Petukhov]{A.\,V.~Petukhov}
\address{Institute for Information Transmission Problems (Kharkevich Institute), Russian Academy of Sciences}
\email{alex-{}-2@yandex.ru}
\date{}
\begin{abstract}
We give a geometric description of the formal local data of holonomic modules over the first Weyl algebra, the algebra of differential operators in one variable. The exponential part of the associated formal meromorphic connection is encoded by a truncated jet of an affine plane curve at infinity, and the regular part by a regular formal meromorphic connection. We prove that these geometric data uniquely determine the `elementary' blocks in the formal decomposition of holonomic modules. The resulting correspondence is equivariant under automorphisms of the first Weyl algebra, including the Fourier transform.
\end{abstract} 

\maketitle

\noindent 
{\Small \text{\scshape{Keywords}}: Holonomic $D$-modules, formal meromorphic connections, stationary phase formula, truncated jets of affine plane curves, logarithmic families of curves}

\section{Introduction}

In this paper we take a step towards a geometric description of the Stokes data associated with a holonomic module over the first Weyl algebra, which we denote by $W_{1, \mathbb C}$.
As is well known, $W_{1, \mathbb C}$-modules correspond to systems of linear homogeneous differential equations in one variable with polynomial coefficients. 
For a differential equation, one is interested both in its solutions, formal or analytic, and in their asymptotic behavior. 
Stokes data bring these two aspects together. The category of Stokes data is of particular interest because it is equivalent to the category of analytic meromorphic connections; see~\cite{Kas15, Mal91}.

If we fix the exponential class of the solutions at the singular points --- the irregular class --- then the category of Stokes data can be described as a category of representations of a quiver with relations. 
For more details, see~\cite[\S 5.1]{Boa21}.

Following the ideas of M.~Kontsevich~\cite[\S 3.4]{Kon09}, we associate with each irregular class a set of curves, called a logarithmic family.
We develop this idea by constructing a correspondence between logarithmic families and irregular classes using a decorated version of logarithmic families, denoted by $\widetilde{J}^\infty$. Restricted to `elementary' objects, this correspondence is  bijective. The precise statement is Theorem~\ref{T1}; see also~\S\ref{SS:elbl}.

The space $J^\infty$, introduced by Kontsevich, is central to the definition of logarithmic families. It can be viewed as a space of truncated jets of curves at infinity. For more context on logarithmic families see~\cite{BelKon07,KS14,KS18,KO21}; in particular, \cite{KO21} gives a coordinate-free definition of $J^\infty$. Here we give an elementary  definition of this space and use it to describe the exponential factor of a formal solution. One can think of $J^\infty$ as a `universal' divisor at infinity of $\mathbb A^2$. In Lemma~\ref{thm:J_transitivity}, we show that the relation defining $J^\infty$ is an equivalence relation. We also prove that the natural action of $\operatorname{Aut}(\mathbb A^2)$ preserves the space of truncated jets of affine plane curves. Both assertions were stated without proof in~\cite[\S 3.4]{Kon09}.

In Appendix~A we formulate and prove analogous statements for power series in place of curves. This reflects the analogy between the truncated jets of affine plane curves and formal Laurent series.

The local data of a $W_{1,\mathbb C}$-module are encoded by the formal meromorphic connection associated with it. We construct this correspondence as the composition of two functors:
\[
\operatorname{Hol}(W_{1,\mathbb C})
\xrightarrow[]{\text{germ}}
\operatorname{Hol}({\mathbb C[[t_c]]\langle\partial\rangle})
\xrightarrow[]{\Phi}
\operatorname{Hol}({\mathbb C((t_c))\langle\partial\rangle}).
\]
Both functors are defined in~\S 2. After applying the functor $\operatorname{germ}$, we obtain a module
that corresponds to an equation over the ring of formal power series. The functor $\Phi$ is a possible extension of the inclusion $\mathbb C[[t_c]]\hookrightarrow\mathbb C((t_c))$ to the category of holonomic $D$-modules.
We also consider a global version of $\Phi$, denoted by $\Phi_{\operatorname{glob}}$ and defined by the family $\{\Phi_c\}_{c\in\mathbb P^1}$. In Theorem~\ref{T2} we show that the action of the group $\mathfrak G \subset \operatorname{Aut}(\mathbb A^2)$ of symplectic
automorphisms on $\widetilde{J}^\infty$ agrees with the action of $\operatorname{Aut}(W_{1,\mathbb C})$ on $\Phi_{\operatorname{glob}}(M)$ for every holonomic $W_{1, \mathbb C}$-module~$M$. The group $\mathfrak G$ is defined in~\S 4.

The Turrittin--Levelt theorem describes the general form of a formal solution near an irregular singular point of a differential equation; see~\cite{Tur55,Levelt}. It can be recast in the language of formal connections. In particular, it gives a complete description of indecomposable connections in terms of elementary blocks $\operatorname{El}(\rho,\varphi,R)$; see~\cite{Sab08}. 
 Elementary blocks describe the formal local type of a meromorphic connection over the field $\mathbb C((t))$.
These results may be viewed as an analogue of the Jordan normal
form for formal meromorphic connections. The third argument $R$ in $\operatorname{El}(\rho,\varphi,R)$ records a regular formal connection.

The local Fourier transform plays a special role in this work: it is a key ingredient in the proofs of Theorems~\ref{T0} and~\ref{T2}. For meromorphic connections, it is defined, roughly speaking, by extending the given formal meromorphic connection to a global object, applying the global Fourier transform, and taking the relevant local part.

More precisely, in~\cite{Sab08, BE04}, given a $\mathbb{C}((t))$-connection \(N\), one chooses a meromorphic connection \(\mathcal N\) whose formalization at the point under consideration is isomorphic to \(N\) and whose singularity at infinity is regular. One then applies the global Fourier transform $\mathcal F$ and takes the formalization of $\mathcal F\mathcal N$ at the appropriate singular point. The resulting object is independent of the choice of \(\mathcal N\)~\cite{BE04}.

For a connection with a singularity at infinity, however, this construction defines the local Fourier transform only under the condition
\[
    N^\nabla
    :=
    \ker\!\left(\nabla:N\longrightarrow N\,\right)
    =0.
\]
The trivial connection shows why:
\(
    N=(\mathbb C((t)), \mathrm{d}),
    \
    N^\nabla=\mathbb C.
\)
As a global extension, we may take
\(
    \mathcal N=
    W_{1,\mathbb C}/W_{1,\mathbb C}\partial_x,
\)
whose Fourier transform is the $\delta$-module
\(
    \mathscr F(\mathcal N)
    \simeq
    W_{1,\mathbb C}/W_{1,\mathbb C}\cdot x.
\)
Passing to the $\mathbb C((t))$-connection annihilates this module:
\[
    \mathbb C((t))
    \otimes_{\mathbb C[[t]]}
    \bigl(W_{1,\mathbb C}/W_{1,\mathbb C}\cdot x\bigr)
    =0.
\]
Thus, in the presence of horizontal sections, the Fourier transform may send part of the connection data to a module supported at a point. This part is lost when we pass to a formal meromorphic connection.

We use D.~Arinkin's approach~\cite{Ari08}, which removes this restriction. The idea is to regard a $\mathbb C((t))$-connection $N$ as a \(W_{1,\mathbb C}\)-module via the inclusion
\[
    W_{1,\mathbb C}
    =\mathbb C[t]\langle\partial_t\rangle
    \hookrightarrow
    \mathbb C((t))\langle\partial_t\rangle.
\]
%Arinkin shows that one can canonically construct from \(N\) an associated \(W_{1,\mathbb C}\)-module to which the global Fourier transform applies, and that the result again determines a formal meromorphic connection. 
Arinkin shows that there is a canonical $W_{1,\mathbb C}$-module associated with 
$N$, to which the global Fourier transform can be applied; the result is again a formal meromorphic connection. 
%which the global Fourier transform can be applied, and that the result again determines a formal meromorphic connection. 
Our choice of the map
\[
\operatorname{Hol}(\mathbb C[[t_c]]\langle\partial\rangle)
\longrightarrow
\operatorname{Hol}(\mathbb C((t_c))\langle\partial\rangle)
\]
is motivated by the compatibility of $\Phi$ with this deﬁnition of the Fourier transform. This approach leads to Theorem~\ref{T0}, in which we use the local stationary phase formula to describe the action of $\operatorname{Aut}(W_{1,\mathbb C})$ on formal meromorphic connections.

In~\S 2 we discuss the formal data carried by a holonomic $W_{1,\mathbb C}$-module and the action of Weyl algebra automorphisms on these data. In~\S 3 we give an elementary definition of $J^\infty$ and introduce its decorated version $\widetilde{J}^\infty$. Its elements are pairs $(j,\mathcal M)$, where $j\in J^\infty$ and $\mathcal M$ is a regular $\mathbb C((t))$-connection. The results of~\S 3 are used in the proofs of Lemmas~\ref{L:cass1} and~\ref{L:cass2}. The main results --- Theorems~\ref{T0}, \ref{T1}, and~\ref{T2} --- are stated and proved in~\S 4.

Appendix~A presents an alternative description of $J^\infty$ in terms of pairs of formal power series. In this description, $J^\infty$ can be interpreted as the divisor at infinity of a compactification $J^\infty_+$ of the affine plane $\mathbb A^2$. The word `compactification' is meant informally here: this is not a compactification in either the algebraic or the topological sense.
In Appendix~B, following the thesis~\cite{Zap15}, we describe regular holonomic $\mathbb C[[t]]\langle\partial\rangle$-modules in terms of quiver representations. This description retains more of the structure of a module germ than its meromorphic localization does. In particular, the data of the quiver representation cannot, in general, be recovered from the elements of $\widetilde{J}^{\infty}$ associated with the module.

\section{Formalization and elementary blocks} \label{s2}
\label{sec:formalization-and-elementary-blocks}

In this section we describe the local data determined by a holonomic $W_{1,\mathbb{C}}$-module near a singular point and the action of automorphisms of $W_{1, \mathbb{C}}$ on these data.

We use the stationary phase formula of~\cite{Sab08} to describe the Fourier transform of elementary blocks. For the definition of the formalization functors $\Psi$ and $\Phi$ and their interaction with the Fourier transform, we follow~\cite{Ari08}.

\subsection{Formal germs of a module} \label{s2.1}
\label{subsec:formalization}

Let
\[
W_{1,\mathbb C}
=
\mathbb C[x]\langle\partial_x\rangle,
\qquad
[\partial_x,x]=1,
\]
and let $\mathcal M$ be a left holonomic $W_{1,\mathbb C}$-module. Recall that, for the first Weyl algebra, the holonomic modules are precisely the cyclic modules that are not free~\cite{Stafford78}.

For a local coordinate $t$, set
\[
\mathscr D_t
=
\mathbb C[[t]]\langle\partial_t\rangle,
\qquad
\mathscr D_t^\star
=
\mathbb C((t))\langle\partial_t\rangle.
\]
\begin{definition}
A \emph{formal meromorphic connection} is a finite-dimensional $\mathbb C((t))$-vector space $V$ equipped with a connection $\nabla$. Here a connection means a $\mathbb C$-linear operator on $V$ satisfying the Leibniz rule $\nabla(fv)=f'v+f\nabla(v)$. Once a basis of $V$ has been chosen, the images of the basis vectors under $\nabla$ determine its connection matrix.
\end{definition}
\begin{definition}
(1) A formal meromorphic connection is \emph{regular} if there is a basis of $V$ in which every entry of its connection matrix has at most a simple pole.

(2) A formal meromorphic connection is \emph{irregular} if it is not regular. If, moreover, $V$ has no nonzero $\nabla$-stable subspace $V'$ for which $(V', \nabla)$ is regular, we call the connection \emph{purely irregular}.

(3) For every formal connection $(V, \nabla)$, there is a canonical decomposition of $V$ into two $\nabla$-stable subspaces, $V_{\operatorname{reg}}\oplus V_{\operatorname{irr}}$, such that $(V_{\operatorname{reg}}, \nabla)$ is regular and $(V_{\operatorname{irr}}, \nabla)$ is purely irregular; see~\cite[\S 3.3]{Ari08}.
\end{definition}

Every formal meromorphic connection $(V, \nabla)$ naturally defines a $\mathscr D_t^\star$-module in which $\partial$ acts as $\nabla$. We denote this module by $\mathscr D_t^\star(V, \nabla)$. This construction gives an equivalence of the corresponding categories. We say that $\mathscr D_t^\star(V, \nabla)$ corresponds to the formal connection $(V, \nabla)$.

For $c\in\mathbb C$, choose the local coordinate $t_c=x-c$. Using the homomorphism
\[
W_{1,\mathbb C}\longrightarrow\mathscr D_{t_c},
\qquad
x\longmapsto c+t_c,
\qquad
\partial_x\longmapsto\partial_{t_c},
\]
we make the following definition.
\begin{definition}
The \emph{germ of $\mathcal M$ at a finite point $c$} is the holonomic $\mathscr D_{t_c}$-module
\[
\mathcal M_c
:=
\mathscr D_{t_c}
\otimes_{W_{1,\mathbb C}}\mathcal M.
\]
\end{definition}

The category of holonomic $\mathscr D_{t_c}$-modules is the direct sum of its regular and irregular subcategories~\cite[Remark~3.6]{Sab08}. Thus every holonomic module can be written as
\[
\mathcal M_c
\simeq
\mathcal M_{c,\operatorname{reg}}
\oplus
\mathcal M_{c,\operatorname{irr}},
\]
where $\mathcal M_{c,\operatorname{reg}}$ has meromorphic localization
$$\mathcal M_{c,\operatorname{reg}}^\star:=\mathbb C((t_c))\otimes_{\mathbb C[[t_c]]}\mathcal M_{c,\operatorname{reg}}$$ corresponding to a regular
formal connection. The similarly defined $\mathcal M^\star_{c,\operatorname{irr}}$ corresponds to a purely irregular formal connection.
The natural map $\mathcal M_{c,\operatorname{irr}}\to \mathcal M^\star_{c,\operatorname{irr}}$ is an isomorphism, whereas $\mathcal M_{c,\operatorname{reg}}\to \mathcal M^\star_{c,\operatorname{reg}}$ may have a kernel, which contains a direct sum of copies of the $\delta$-module.

To define the germ at infinity, set $t_\infty=x^{-1}$. The formulas
\[
x=t_\infty^{-1},
\qquad
\partial_x=-t_\infty^2\partial_{t_\infty}
\]
define a homomorphism
\[
W_{1,\mathbb C}
\longrightarrow
\mathscr D_{t_\infty}^\star.
\]
\begin{definition}
The \emph{germ of $\mathcal M$ at infinity} is the formal meromorphic connection
\[
\mathcal M_\infty^{\star}
:=
\mathscr D_{t_\infty}^\star
\otimes_{W_{1,\mathbb C}}\mathcal M.
\]
\end{definition}

Unlike a germ at a finite point, every $\mathcal M^{\star}_\infty$ is a meromorphic connection. Its regular and irregular parts are also $\mathbb C((t_\infty))$-connections.

\subsection{Formalization functors}\label{s2.2}
\label{subsec:elementary-connections}
The local Fourier transform is defined for formal meromorphic connections. To apply it to the germ of a holonomic module at a finite point, we must first associate a formal meromorphic connection with that germ. We call the functors used for this purpose \emph{formalization functors}.

The simplest example of a formalization functor is meromorphic localization:
\[
\Psi_c:
\operatorname{Hol}\big({\mathscr{D}_{t_c}}\big)
\longrightarrow
\operatorname{Hol}\big(\mathscr{D}_{t_c}^\star\big),
\qquad
\Psi_c(N_c)
=
\mathbb C ((t_c))\otimes_{\mathbb C[[t_c]]} N_c.
\]

However, $\Psi$ vanishes on modules supported at points. For instance, for \[\delta_0=W_{1,\mathbb C}/W_{1,\mathbb C}\cdot x, \] we have \(\Psi\bigl((\delta_0)_0\bigr)=0.\)
On the other hand,
$$\mathcal F(\delta_0)\simeq W_{1,\mathbb C}/W_{1,\mathbb C}\cdot\partial_x,$$
and the meromorphic localization of its germ at \(0\) is nonzero.
Thus \(\Psi\) alone does not capture the behavior of local data under the Fourier transform. We need another formalization functor that retains the information carried by \(\delta\)-modules.

Write $\iota$ for the inclusion
$\mathbb C[[t_c]]\hookrightarrow\mathbb C((t_c))$. For $M\in\operatorname{Hol}\big(\mathcal D_{t_c}^\star\big)$, let $\iota_*M$ be the same module regarded as a $\mathcal D_{t_c}$-module.
The functor $\iota_!$, dual to $\iota_*$, is defined by
\[
\iota_!N_c
:=
\mathbb D_{t_c}\bigl(\iota_*(\mathbb D_{t_c}^\star N_c)\bigr),
\]
where $\mathbb D_{t}^\star$ and $\mathbb D_{t}$ are the Verdier duality functors.

The formal vanishing cycles functor
\[
\Phi_c:
\operatorname{Hol}(D_{t_c})
\longrightarrow
\operatorname{Hol}(D_{t_c}^\star)
\]
is defined as the left adjoint of $\iota_!$. It is also the right adjoint of $\iota_*$. Together these functors form a chain of adjunctions:
\begin{equation}
\Psi_c
\dashv
\iota_*
\dashv
\Phi_c
\dashv
\iota_!
\dashv
\Psi_c.
\label{eq:adjunction-chain}
\end{equation}

If the original object is already a formal meromorphic connection, both formalization functors preserve it up to natural isomorphism:
\[
\Psi_c(\iota_*N_c)\simeq N_c,
\qquad
\Phi_c(\iota_*N_c)\simeq N_c,
\]
where $N_c=\big(\mathbb C((t_c))^r, \nabla\big)$.
In particular,
\[
\Psi_c(\mathcal M_{c,\operatorname{irr}})
\simeq
\Phi_c(\mathcal M_{c,\operatorname{irr}})
\simeq
\mathcal M_{c,\operatorname{irr}}.
\]
For uniformity, it is convenient to define $\Phi_\infty$ as the identity functor on $\operatorname{Hol}(\mathcal D_{t_\infty}^\star)$.

The functor $\Phi$ has an explicit description in terms of microlocalization.
Set $$\mu D=\big\{\sum_{i=-\infty}^k a_i(t)\partial^i\big|\ a_i(t)\in\mathbb{C}[[t]],\ k\in\mathbb Z\big\};$$ then $\mu D \otimes_{\mathscr{D}_t}M$ is the microlocalization of $M$ at the origin. \begin{remark}
Using the Fourier transform, it is easy to check that $\mu D$ is indeed a ring.
\end{remark} We have the following formula; see~\cite[Example~3.2 and Theorem~3.3]{Ari08}:
$$\mu D \otimes_{\mathscr{D}_{t_c}}M=\iota_!\Phi_c(M),\qquad M\in\operatorname{Hol}(\mathscr{D}_{t_c}).$$
In particular, the isomorphism remains valid when both sides are viewed as $\mathscr{D}_{t_c}$-modules; we denote this restriction by $\bullet_{|\mathbb{C}[[t_c]]}$. Applying $\Psi_c$ to both sides and using the adjunction between $\iota_!$ and $\Psi$, we obtain
$$\Psi_c\big((\mu D \otimes_{\mathscr{D}_{t_c}}M_c)_{|\mathbb{C}[[t_c]]}\big)=\Psi_c\big( \iota_!\Phi_c(M_c)\big)=\Phi_c(M_c).$$
Now the left-hand side is simply the restriction to $\mathscr{D}_t^\star$, so
$$(\mu D \otimes_{\mathscr{D}_{t_c}}M_c)_{|\mathbb{C}((t_c))}=\Phi(M_c).$$
\begin{remark}
The preceding formula may be taken as the definition of $\Phi$.
\end{remark}
Set $\Sigma_0=\{\lambda\in\mathbb C\mid
0\leq\operatorname{Re}\lambda<1\text{ and }\hspace{2pt}\lambda\ne0\}$. The microlocal description allows us to compute $\Phi$ on indecomposable regular $\mathcal D_t$-modules:
\begin{center}
\begin{tabular}{c|c}
Indecomposable module $N_c$ & $\Phi_c(N_c)$ \\
\hline
\(\mathscr D_{t_c}/\mathscr D_{t_c}({t_c}\partial_{t_c}-\lambda)^n, \lambda\in\Sigma_0\)
&
\(\mathscr D_{t_c}^\star/\mathscr D_{t_c}^\star({t_c}\partial_{t_c}-\lambda)^n
\simeq
\mathscr D_{t_c}^\star/\mathscr D_{t_c}^\star({t_c}\partial_{t_c}-(\lambda-1))^n\)
\\ \hline
\(\mathscr D_{t_c}/\mathscr D_{t_c}({t_c}\partial_{t_c})^n\)
&
\(\mathscr D_{t_c}^\star/\mathscr D_{t_c}^\star({t_c}\partial_{t_c})^n\)
\\ \hline
\(\mathscr D_{t_c}/\mathscr D_{t_c}(\partial_{t_c} {t_c})^n\)
&
\(\mathscr D_{t_c}^\star/\mathscr D_{t_c}^\star({t_c}\partial_{t_c})^n\)
\\ \hline
\(\mathscr D_{t_c}/\mathscr D_{t_c}(\partial_{t_c} {t_c})^m\partial_{t_c}\)
&
\(\mathscr D_{t_c}^\star/\mathscr D_{t_c}^\star({t_c}\partial_{t_c})^m\)
\\ \hline
\(\mathscr D_{t_c}/\mathscr D_{t_c}({t_c}\partial_{t_c})^m {t_c}\)
&
\(\mathscr D_{t_c}^\star/\mathscr D_{t_c}^\star({t_c}\partial_{t_c})^{m+1}\)
\end{tabular}
\end{center}
In particular, $\Phi_c$ does not vanish on $\delta$-modules.

To simplify notation, we henceforth omit the subscripts on $\Phi$ and $\Psi$: the point at which the germ is taken will always be clear from the context.

\subsection{Elementary blocks and \texorpdfstring{$\mathbb C ((t))$}{C((t))}-connections}
After applying a formalization functor to a module germ, we can work with $\mathbb{C} ((t))$-connections. We now recall their description in terms of elementary blocks~\cite{Sab08}.

Consider the map
\[
\rho:
\mathbb C[[t]]
\longrightarrow
\mathbb C[[u]],
\qquad
t\longmapsto\rho(u),
\]
where
\(
\rho(u)\in u^p\mathbb C[[u]]^\times,\
p:=\operatorname{ord}_u\rho\geq1.
\)
After a formal change of parameter, we may assume that $\rho(u)=u^p$.

For a $\mathbb C((u))$-connection $N$, its direct image $\rho_+N$ is defined on the same complex vector space by
\[
f(t)m=f(\rho(u))m,
\qquad
\nabla_{\partial_t}m
=
\frac1{\rho'(u)}\nabla_{\partial_u}m.
\]

For $\varphi\in\mathbb C((u))$, let $E^\varphi$ denote the rank-one exponential connection
\[
\nabla_{\partial_u}(f e_\varphi)
=
\bigl(\partial_u f+\varphi'(u)f\bigr)e_\varphi.
\] With this convention, $E^\varphi$ and $E^\psi$ are isomorphic whenever $\varphi-\psi\in\mathbb C[[u]]$; see~\cite{Sab08}.

\begin{definition}
Let $\rho$ be as above, let $\varphi\in\mathbb C((u))$, and let $R$ be a regular $\mathbb C((u))$-connection. The associated \emph{elementary formal connection}, or \emph{elementary block}, is
\[
\operatorname{El}(\rho,\varphi,R)
:=
\rho_+\bigl(E^\varphi\otimes R\bigr).
\]
\end{definition}

A presentation of an elementary block in terms of $\rho$, $\varphi$, and $R$ is called \emph{minimal} if the connection admits no such presentation with ramification of smaller degree.

There is considerable freedom in the presentation of an elementary block. Two minimal presentations $\operatorname{El}(\rho,\varphi,R)$ and $\operatorname{El}(\widetilde{\rho}, \widetilde{\varphi}, \widetilde{R})$ define isomorphic blocks if and only if there is an $h\in u\mathbb{C}[[u]]$ with $h'(0)\ne0$ such that $R\simeq \widetilde{R}\circ h$ and \[\rho=\widetilde{\rho}(h),\quad\varphi-\widetilde{\varphi}(h)\in \mathbb{C}[[u]].\]

If $\varphi\notin\mathbb C[[u]]$ and
\[
p=\operatorname{ord}\rho,
\qquad
q=-\operatorname{ord}\varphi,
\qquad
r=\operatorname{rk}R,
\]
then
\begin{equation}
\operatorname{rk}
\operatorname{El}(\rho,\varphi,R)
=
pr,
\qquad
\operatorname{slope}
\operatorname{El}(\rho,\varphi,R)
=
\frac qp.
\label{eq:slope-elementary-block}
\end{equation}
The term `slope' comes from the Newton polygon procedure for computing formal solutions.
A regular block has a minimal presentation of the form $\operatorname{El}(u, 0, R)$. A formal meromorphic connection is regular if and only if its slope is zero.

The following result is well known; see~\cite[Corollary~3.3]{Sab08} and~\cite{Tur55,Levelt,BabbittVaradarajan,vdPutSinger}.
\begin{theorem}[Refined Turrittin--Levelt decomposition] \label{th:levtur}
Every finite-dimensional formal meromorphic connection $N$ admits a decomposition
\begin{equation}
N
\simeq
\bigoplus_{i\in I}
\operatorname{El}(\rho_i,\varphi_i,R_i).
\label{eq:refined-turrittin-levelt}
\end{equation}
The presentations can be chosen to be minimal, with the connections $(\rho_i)_+E^{\varphi_i}$ irreducible and pairwise nonisomorphic. Under these conditions, the decomposition is unique up to permutation and isomorphism of the elementary blocks.
\end{theorem}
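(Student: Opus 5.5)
The plan is the classical route in two stages: first a decomposition by exponential factors (Hukuhara--Turrittin--Levelt), then the regular twist inside each factor.

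\emph{Existence.} By the Newton polygon procedure, after a ramification $t=u^p$ with $p$ divisible enough, the pullback $\rho^+N$ splits as
\[
\rho^+N\simeq\bigoplus_{\varphi\in\Phi}E^{\varphi}\otimes R_\varphi,
\]
where $\Phi\subset u^{-1}\mathbb C[u^{-1}]$ is a finite set and each $R_\varphi$ is regular. The splitting is obtained by induction on rank and on the slope. At each step, the leading coefficients of a connection matrix of minimal pole order have distinct eigenvalues; this separates blocks via a formal gauge transformation solved order by order, and the Hukuhara splitting lemma makes the transformation converge in $\mathbb C((u))$. For each block we then twist by $E^{-\varphi}$ and recurse. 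A nilpotent leading term forces a further ramification, which is where the shearing argument enters.

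\emph{Descent.} The Galois group $\mu_p$ of $\mathbb C((u))/\mathbb C((t))$ acts on $\Phi$ by $\varphi(u)\mapsto\varphi(\zeta u)$, and the splitting above is canonical. Grouping the $\varphi$'s into orbits, $N$ splits into pieces indexed by the orbits. For an orbit with stabilizer of index $p'$, we write $\varphi$ as a series in $u'=u^{p/p'}$ with $p'$ minimal. Then $E^\varphi\otimes R_\varphi$ descends to $\mathbb C((u'))$, and the corresponding piece of $N$ is $\rho'_+(E^{\varphi}\otimes R)$. This is a Frobenius reciprocity argument: adjunction between $\rho'^+$ and $\rho'_+$, together with $\rho'^+\rho'_+E\simeq\bigoplus_{\zeta}E\circ\zeta$. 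Minimality of $p'$ is exactly the statement that the $p'$ conjugates of $\varphi$ are pairwise distinct modulo $\mathbb C[[u']]$. Pairwise distinctness in turn gives the irreducibility of $\rho'_+E^\varphi$, since its pullback is a sum of distinct rank-one characters permuted transitively. Distinct orbits give nonisomorphic $\rho_+E^\varphi$'s, by comparing pullbacks.

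\emph{Uniqueness.} Pull back any given decomposition to a common ramification. There the set of exponents $\varphi$, taken modulo $\mathbb C[[u]]$ and counted with the ranks of their regular parts, is determined by $N$. One way to see this: $\operatorname{Hom}(E^{\psi},\rho^+N)\ne0$ detects exactly the $\psi$ occurring, and more precisely $E^{-\psi}\otimes\rho^+N$ has a regular part of determined rank. Hence the orbits, and therefore the pairs $(\rho_i,\varphi_i)$ up to the stated equivalence via $h$, are determined. The regular part $R_i$ is recovered as the regular part of $E^{-\varphi_i}\otimes\rho_i^+N$, restricted to one summand. We use that regular connections form a direct summand category, with the canonical decomposition $V_{\operatorname{reg}}\oplus V_{\operatorname{irr}}$ recalled in \S\ref{s2}. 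This recovery is functorial, so $R_i$ is determined up to isomorphism.

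\emph{Main obstacle.} I expect the main obstacle to be the existence step with a nilpotent leading coefficient: producing the correct ramification and shearing transformation, and showing that the induction terminates. I would handle it with the Newton polygon or Katz's invariant argument, namely that the slope strictly decreases or the rank drops. For the regular parts, the descent compatibility of $R$ is formal once the splitting is canonical.
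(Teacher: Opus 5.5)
Your outline is correct (the nilpotent/shearing step is the standard Turrittin argument you defer to), but it takes a different route from the paper. The paper does not prove existence. It quotes the refined Turrittin--Levelt theorem from Sabbah's Corollary~3.3 and the classical references, and its only argument is that uniqueness follows from the Krull--Schmidt theorem in $\operatorname{Hol}(\mathscr D_t^\star)$, all of whose objects have finite length.

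Your route differs in both halves:
\begin{itemize}
\item \emph{Existence.} You sketch the classical proof: exponential splitting after a ramification, then $\mu_p$-descent of the canonical splitting. Irreducibility of $\rho_+E^\varphi$ comes from a transitive action on pairwise distinct rank-one characters.
\item \emph{Uniqueness.} You use functorial invariants. After a common ramification $\rho$, these are the classes $[\psi]$ with $(E^{-\psi}\otimes\rho^+N)_{\operatorname{reg}}\ne0$ and the ranks of these regular parts. You then recover $R_i\simeq(E^{-\varphi_i}\otimes\rho_i^+N)_{\operatorname{reg}}$.
\end{itemize}

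The Krull--Schmidt route is shorter, but by itself it only gives uniqueness of a decomposition into \emph{indecomposables}. To reach the stated form one still needs two facts. First, for a minimal presentation, $\operatorname{End}\operatorname{El}(\rho,\varphi,R)\simeq\operatorname{End}R$, so the block is indecomposable when $R$ is. Second, isomorphic indecomposable blocks have equivalent minimal triples, so indecomposable summands can be regrouped by exponential type. Both facts come from the computation $\rho^+\rho_+(E^\varphi\otimes R)\simeq\bigoplus_{\zeta\in\mu_p}E^{\varphi(\zeta u)}\otimes R(\zeta u)$ that underlies your invariants. So your argument makes explicit what the paper leaves implicit, and it exhibits each $R_i$ as a functor of $N$.

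Three small corrections:
\begin{itemize}
\item The criterion ``$\operatorname{Hom}(E^\psi,\rho^+N)\ne0$'' is wrong as stated. Indeed $\operatorname{Hom}(E^\psi,E^\psi\otimes R)=R^\nabla$, which vanishes for the Airy block, where $R\simeq(\mathbb C((u)),\mathrm d+\frac12\frac{\mathrm du}{u})$. The regular-part invariant you give next is the one to use.
\item In the formal setting the splitting lemma produces a gauge transformation in $\operatorname{GL}_n(\mathbb C[[u]])$ order by order, so there is no convergence to prove.
\item By minimality and disjointness of orbits, the regular part of $E^{-\varphi_i}\otimes\rho_i^+N$ is already exactly $R_i$; no restriction to a summand is needed.
\end{itemize}
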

Uniqueness follows from the Krull--Schmidt theorem for the category $\operatorname{Hol}(\mathcal D_t^\star)$, whose objects all have finite length; see~\cite{Atiyah}.

After applying $\Phi$, the regular part of a module germ can also be written as an elementary block:
\(
\operatorname{El}(u, 0, \Phi(M_{c, \operatorname{reg}})).
\)

For every point \(c\in\mathbb P^1\), the Turrittin--Levelt decomposition gives a decomposition of the category of formal meromorphic connections by slope:
\[
    \operatorname{Hol}\bigl(\mathcal D_{t_c}^{\star}\bigr)
    =
    \bigoplus_{q\in\mathbb Q_{\geqslant 0}}
    \operatorname{Hol}\bigl(\mathcal D_{t_c}^{\star}\bigr)^q,
\]
where the superscript \(q\) indicates that every elementary block of the connection has slope~\(q\).

For an elementary block
\(E=\operatorname{El}(\rho,\varphi,R)_\infty\)
of slope at most one, set
\[
    \alpha(E)=
    \begin{cases}
        0, & \operatorname{slope}E<1,\\[1mm]
        \displaystyle
        -\left.\rho^2\frac{\varphi'}{\rho'}\right|_{u=0},
        & \operatorname{slope}E=1.
    \end{cases}
\]
It is easy to see that this number does not depend on the choice of a minimal presentation.
For \(\alpha_0\in\mathbb C\), write
\(\operatorname{Hol}(\mathcal D_{t_\infty}^{\star})^{\leqslant1,(\alpha_0)}\)
for the full subcategory of connections whose elementary blocks \(E\) all have slope at most one and satisfy \(\alpha(E)=\alpha_0\). In particular,
\[
    \operatorname{Hol}(\mathcal D_{t_\infty}^{\star})^{\leqslant1,(0)}
    =
    \operatorname{Hol}(\mathcal D_{t_\infty}^{\star})^{<1}.
\]

By~\cite{Ari08}, we have the decomposition
\[
    \operatorname{Hol}(\mathcal D_{t_\infty}^{\star})
    =
    \operatorname{Hol}(\mathcal D_{t_\infty}^{\star})^{>1}
    \oplus
    \bigoplus_{\alpha_0\in\mathbb C}
    \operatorname{Hol}(\mathcal D_{t_\infty}^{\star})^{\leqslant1,(\alpha_0)}.
\]
For a connection \(N\), denote the corresponding components by \(N^{>1}\) and \(N^{\leqslant1,(\alpha_0)}\).

This decomposition is needed to define the local Fourier transform at infinity: the transform sends \(N^{\leqslant1,(\alpha_0)}\) to a formal meromorphic connection at \(\alpha_0\); see~\S\ref{s2.3}.

\begin{example}
Consider the Airy module
\[
    \mathcal{A}i
    =
    W_{1,\mathbb C}/W_{1,\mathbb C}(\partial_x^2-x).
\]
At infinity, set \(t=x^{-1}=u^2\). Substituting a formal solution into \(y''-xy=0\) gives
\[
    y_\pm
    =
    \exp\left(\pm\frac{2}{3u^3}\right)
    u^{1/2}a_\pm(u),
    \qquad
    a_\pm(u)\in1+u^3\mathbb C[[u^3]].
\]
Hence
\[
    \mathcal{A}i_\infty
    \simeq
    \operatorname{El}\left(
        u^2,
        -\frac{2}{3u^3},
        R
    \right),
    \qquad
    R=
    \left(
        \mathbb C((u)),
        \mathrm d+\frac12\frac{\mathrm du}{u}+\frac{\mathrm d a_-}{a_-}
    \right).
\]
Since \(a_-\in\mathbb C[[u]]^\times\), the change of basis
\(e\mapsto a_-(u)^{-1}e\) removes the last term; as a result $R\simeq\left(
        \mathbb C((u)),
        \mathrm d+\frac12\frac{\mathrm du}{u}
    \right)$.
Thus the elementary block has rank \(2\) and slope \(3/2\).
\end{example}

\subsection{Automorphisms of the first Weyl algebra}\label{s2.3}
\label{subsec:weyl-automorphisms}

We now describe how Weyl algebra automorphisms act on the constructions above. An automorphism acts on holonomic \(W_{1,\mathbb C}\)-modules, and hence on their local germs: the germs of a module are sent to the germs of the transformed module. After formalization, each germ decomposes into elementary blocks, giving an induced action on collections of elementary blocks. The group $\operatorname{Aut}(W_{1,\mathbb C})$ is generated by the following automorphisms~\cite{Dix68}:
\begin{equation}
\begin{aligned}
\sigma_P&:\quad
x\longmapsto x,
&
\partial_x&\longmapsto\partial_x+P(x),
&&
P\in\mathbb C[x],
\\
\xi_\alpha&:\quad
x\longmapsto\alpha x,
&
\partial_x&\longmapsto\alpha^{-1}\partial_x,
&&
\alpha\in\mathbb C^\times,
\\
\mathcal F&:\quad
x\longmapsto\partial_x,
&
\partial_x&\longmapsto-x.
\end{aligned}
\label{eq:weyl-generators}
\end{equation}

We use the definition of the local Fourier transform from~\cite[Theorem~A]{Ari08}. Let \(c\in\mathbb C\), and let \(N\) be a formal meromorphic connection at \(c\). Write \(\iota_c\) for the inclusion \(\mathbb C[[t_c]]\hookrightarrow\mathbb C((t_c))\), and \(j_{c;*}\) for the restriction of scalars along the homomorphism
\[
W_{1,\mathbb C}\to\mathbb C[[t_c]]\langle\partial_{t_c}\rangle,
\quad
x\mapsto c+t_c,\quad \partial\mapsto\partial_{t_c}.
\]
We define \(\mathscr F^{(c,\infty)}(N)\) to be the formal meromorphic connection at infinity characterized by the natural isomorphism
\[
    \mathcal F\bigl(j_{c;*}\iota_{c;!}N\bigr)
    \simeq
    \dot{\jmath}_{\infty; *}
    \bigl(\mathscr F^{(c,\infty)}(N)\bigr).
\]
Here \(\mathcal F\) is the Fourier transform of \(W_{1,\mathbb C}\)-modules defined by
\(x\mapsto\partial,\ \partial\mapsto-x\).

For a formal meromorphic connection \(N\) near infinity, with
\(N\in\operatorname{Hol}(\mathscr D_{t_\infty}^\star)^{\leqslant1,(\alpha)}\),
we define \(\mathscr F^{(\infty,\alpha)}(N)\) as the formal meromorphic connection at \(\alpha\) characterized by the natural isomorphism
\begin{equation}
    \mathcal F\bigl(\dot{\jmath}_{\infty; *}N\bigr)
    \simeq
    j_{\alpha;*}\iota_{\alpha;!}
    \bigl(\mathscr F^{(\infty,\alpha)}(N)\bigr).
    \label{eq:phaseC}
\end{equation}
For
\(N\in\operatorname{Hol}(\mathscr D_{t_\infty}^\star)^{>1}\)
we define
\(\mathscr F^{(\infty,\infty)}(N)
\in\operatorname{Hol}(\mathscr D_{t_\infty}^\star)^{>1}\)
by the natural isomorphism
\begin{equation}
    \mathcal F\bigl(\dot{\jmath}_{\infty; *}N\bigr)
    \simeq
    \dot{\jmath}_{\infty; *}
    \bigl(\mathscr F^{(\infty,\infty)}(N)\bigr),
    \label{eq:phaseInfty}
\end{equation}
where \(\dot{\jmath}_{\infty; *}\) is the restriction of scalars along
\[
W_{1,\mathbb C}\to\mathscr D_{t_\infty}^\star,
\quad
(x,\partial)\mapsto
(t_\infty^{-1},-t_\infty^2\partial_{t_\infty})
.\]

The existence of these functors and their uniqueness up to natural isomorphism follow from~\cite[Theorem~A and Lemma~4.3]{Ari08}. By~\cite[\S 2.2]{Ari08}, the local Fourier transform defined above agrees with that of~\cite{BE04}.

\begin{theorem}
For a holonomic \(W_{1,\mathbb C}\)-module \(\mathcal M\), the following stationary phase formulas hold:
\begin{align}
\label{eq:Phi-Fourier}
&\Phi\bigl((\mathcal F\mathcal M)_\infty\bigr)
=
\mathscr F^{(\infty,\infty)}
\bigl(\Phi(\mathcal M_\infty)^{>1}\bigr)
\oplus
\bigoplus_{c\in\mathbb A^1}
\mathscr F^{(c,\infty)}\Phi(\mathcal M_c),\\
&\Phi\bigl((\mathcal F\mathcal M)_\alpha\bigr)
=
\mathscr F^{(\infty,\alpha)}
\left(
\Phi(\mathcal M_\infty)^{\leqslant1,(\alpha)}
\right).
\end{align}
The inequalities in the superscripts specify the components with the indicated slopes.
\end{theorem}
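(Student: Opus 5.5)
The plan is to derive both formulas from Arinkin's global--local comparison, as in \cite[Theorem~A]{Ari08}. Everything rests on a \emph{gluing} (or localization) statement: the holonomic module $\mathcal M$ is determined up to ``formally invisible'' pieces by its germs. The precise form we use is that the canonical morphisms
\[
\mathcal M\longrightarrow \dot{\jmath}_{\infty;*}\mathcal M^\star_\infty,
\qquad
\mathcal M\longrightarrow j_{c;*}\iota_{c;!}\Phi(\mathcal M_c)\quad (c\in\mathbb A^1)
\]
assemble into an exact sequence of $W_{1,\mathbb C}$-modules
\[
0\to \mathcal M\to \dot{\jmath}_{\infty;*}\mathcal M^\star_\infty\oplus\bigoplus_{c\in S} j_{c;*}\iota_{c;!}\Phi(\mathcal M_c)\to \mathcal Q\to 0,
\]
where $S$ is the finite singular set. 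The cokernel $\mathcal Q$ should be ``formally trivial'': it is a sum of modules of the form $j_{c;*}\iota_{c;!}$ applied to connections, compared with their $\dot{\jmath}$-counterparts, or, more concretely, an object whose formal germs and vanishing cycles all vanish. To produce this sequence, I would start from the exact sequence $0\to\mathcal M\to \mathcal M\otimes\mathbb C[x][S^{-1}]\to(\text{supported on }S)\to0$ and replace the localized module by its microlocal completions. This uses the identification $\mu D\otimes M_c=\iota_!\Phi(M_c)$ recalled above.

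Apply $\mathcal F$ to the sequence and take germs. For each finite $c$, the term $\mathcal F(j_{c;*}\iota_{c;!}\Phi(\mathcal M_c))$ is by definition $\dot{\jmath}_{\infty;*}\mathscr F^{(c,\infty)}\Phi(\mathcal M_c)$, a module that is a connection at infinity and invisible at every finite point. Next decompose $\mathcal M^\star_\infty$ by the slope/$\alpha$ decomposition of \cite{Ari08}:
\[
\mathcal M^\star_\infty=(\mathcal M^\star_\infty)^{>1}\oplus\bigoplus_\alpha(\mathcal M^\star_\infty)^{\le1,(\alpha)}.
\]
The $>1$ part goes by \eqref{eq:phaseInfty} to $\dot{\jmath}_{\infty;*}\mathscr F^{(\infty,\infty)}$. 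The $\le1,(\alpha)$ part goes by \eqref{eq:phaseC} to $j_{\alpha;*}\iota_{\alpha;!}\mathscr F^{(\infty,\alpha)}$, which is supported formally only at $\alpha$. Here we use that $\Phi_\infty$ is the identity, so $\Phi(\mathcal M_\infty)=\mathcal M^\star_\infty$.

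Then apply the functor ``germ at $c'$ followed by $\Phi$'' for each $c'\in\mathbb P^1$. This functor is exact: the germ is flat base change, and $\Phi$ is exact, being both a left and a right adjoint in \eqref{eq:adjunction-chain}. We also need the identities $\Phi(j_{c;*}\iota_{c;!}N)_{c'}=N$ for $c'=c$ and $0$ otherwise (including $c'=\infty$), and $\Phi(\dot{\jmath}_{\infty;*}N)_{c'}=0$ for finite $c'$, $=N$ for $c'=\infty$. These are checked from the table computing $\Phi$ together with the adjunctions $\Phi\iota_*\simeq\mathrm{id}$ and $\Phi\iota_!\simeq\mathrm{id}$. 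Collecting the terms gives exactly the two displayed formulas, provided the contribution of $\mathcal F\mathcal Q$ vanishes after germ and $\Phi$.

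The main obstacle is the control of $\mathcal Q$: both its formal invisibility and the fact that $\mathcal F$ preserves this invisibility. I would first reduce to $\mathcal Q$ being built from pieces $j_{c;*}\iota_{c;!}N$ and $\dot{\jmath}_{\infty;*}N$ modulo $\mathcal M$. Then I would argue by d\'evissage on the length of $\mathcal M$, using additivity of both sides of the formulas in short exact sequences and checking the building blocks $\delta_c$, $\mathcal O$, and $E^{P(x)}$ directly. For these, $\mathcal F$ is computed explicitly: $\delta_c\mapsto E^{cx}$, and the generators \eqref{eq:weyl-generators} act by explicit changes of variable.
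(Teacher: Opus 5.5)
The paper does not reprove this statement. It quotes Arinkin's stationary phase formula \cite[Corollary~2.3]{Ari08} and only adds the observation $\Phi(\mathcal M_\infty)=\Psi(\mathcal M_\infty)$ to translate it into the present notation. Reproving it is legitimate, but your mechanism breaks at the step where you apply ``germ at $c'$, then $\Phi$'' to the terms of your exact sequence.

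The modules $\dot{\jmath}_{\infty;*}N$, $j_{c;*}\iota_{c;!}N$, $\mathcal Q$ and their Fourier transforms are not holonomic, whereas $\Phi$ is only defined on $\operatorname{Hol}(\mathscr D_{t_c})$. Worse, the vanishing identities you need are false, even if you extend $\Phi$ by the microlocal formula and use the exactness you invoke. Two examples:
\begin{itemize}
\item $\dot{\jmath}_{\infty;*}\mathbb C((t_\infty))=\mathbb C((x^{-1}))$ contains the $W_{1,\mathbb C}$-submodule $\mathbb C[x,(x-c')^{-1}]$. Its germ at the finite point $c'$ is $\iota_*\mathbb C((t_{c'}))$, and $\Phi$ of that is the trivial connection, not $0$.
\item $j_{c;*}\iota_{c;!}\mathbb C((t_c))\simeq j_{c;*}\bigl(\mathscr D_{t_c}/\mathscr D_{t_c}t_c\partial_{t_c}\bigr)$ contains $W_{1,\mathbb C}\cdot 1\simeq W_{1,\mathbb C}/W_{1,\mathbb C}(x-c)\partial_x$. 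Its germ at $\infty$ is the trivial rank-one connection, not $0$.
\end{itemize}
So these modules are visible away from the point they are attached to. Even at their own point, the germ of $j_{c;*}X$ is $\mathbb C[[t_c]]\otimes_{\mathbb C[x]}X$, which is not $X$, so the relations $\Phi\iota_*\simeq\mathrm{id}$ and $\Phi\iota_!\simeq\mathrm{id}$ do not apply. Taking germs of the Fourier-transformed sequence therefore does not produce the two formulas. The cokernel $\mathcal Q$, which you never define, would have to absorb large, uncontrolled contributions.

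What these big modules do give you is adjunction. For holonomic $\mathcal N$,
\begin{itemize}
\item $\operatorname{Hom}_{W_{1,\mathbb C}}(\mathcal N,\dot{\jmath}_{\infty;*}X)=\operatorname{Hom}(\mathcal N^\star_\infty,X)$;
\item $\operatorname{Hom}_{W_{1,\mathbb C}}(\mathcal N,j_{\alpha;*}\iota_{\alpha;!}Y)=\operatorname{Hom}(\Phi(\mathcal N_\alpha),Y)$.
\end{itemize}
Apply this to $\mathcal N=\mathcal F\mathcal M$ and to the Fourier transforms of your maps $\mathcal M\to\dot{\jmath}_{\infty;*}\mathcal M^\star_\infty$ and $\mathcal M\to j_{c;*}\iota_{c;!}\Phi(\mathcal M_c)$. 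Together with \eqref{eq:phaseC}, \eqref{eq:phaseInfty} and the definition of $\mathscr F^{(c,\infty)}$, this yields natural comparison morphisms from the left-hand sides to the right-hand sides of both formulas. Proving that these morphisms are isomorphisms is the actual content of the theorem, and your argument does not supply it.

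The proposed d\'evissage cannot fill this gap, for two reasons:
\begin{itemize}
\item Additivity in short exact sequences gives at most an equality of classes in a Grothendieck group. To get isomorphisms you need an already constructed natural transformation to which the five lemma applies.
\item The modules $\delta_c$, $\mathcal O$, $E^{P(x)}$ are far from all simple holonomic modules. For example, $W_{1,\mathbb C}/W_{1,\mathbb C}(x\partial_x-\lambda)$ with $\lambda\notin\mathbb Z$ and the Airy module are simple. These are exactly the cases (nontrivial monodromy, ramified slope $3/2$) where the formula has content.
\end{itemize}
Either cite \cite[Corollary~2.3]{Ari08} as the paper does, or prove that the adjunction morphisms are isomorphisms. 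The latter could go via Arinkin's argument, or via the stationary phase formula of \cite{BE04} combined with the comparison of definitions in \cite[\S 2.2]{Ari08}.
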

\begin{proof}
The assertion follows from~\cite[Corollary~2.3]{Ari08} and the equality $\Phi (\mathcal{ M_\infty})=\Psi( \mathcal{ M_\infty})$.
\end{proof}

We now give explicit formulas. 
For $m\in\mathbb Z$, define the regular connection of rank one
\begin{equation} \label{eq:Lm}
L_m
=
\left(
\mathbb C((u)),
\mathrm d-\frac m2\frac{\mathrm du}{u}
\right).
\end{equation}

The local Fourier transforms of elementary blocks are then given as follows.

$\bullet\ \mathscr F^{(c,\infty)}:$ let
\(
E
=
\operatorname{El}(\rho,\varphi,R)_c,
\
p=\operatorname{ord}\rho,
\
q=-\operatorname{ord}\varphi>0.
\)
Then
$$\mathscr F^{(c,\infty)}E
\simeq
\operatorname{El}
(\widehat\rho,\widehat\varphi,\widehat R)_\infty,
$$
where
\begin{equation}
\begin{aligned}
\widehat\rho
&=
\frac{\rho'}{\varphi'},
\\
\widehat\varphi
&=
\varphi
-
\frac{\rho}{\rho'}\varphi'
-
\frac{c}{\widehat\rho},
\\
\widehat R
&=
R\otimes L_q.
\end{aligned}
\label{eq:fourier-finite-irregular-formulas-final}
\end{equation}
Here $\operatorname{ord}\widehat\rho=p+q$. For $c=0$, the exponential factor
$\widehat\varphi$ has a pole of order $q$, so
\[
\operatorname{slope}\mathscr F^{(c,\infty)} E
=
\frac{q}{p+q}
<1.
\]
For $c\ne0$, the term $-c/\widehat\rho$ has a pole of order $p+q$; hence
$\widehat E$ has slope one.

$\bullet$ For a regular block
\(
E=\operatorname{El}(u ,0,R)_c
\), we have
\begin{equation}
   \mathscr F^{(c,\infty)}E
\simeq
\operatorname{El}
\left(u, -\frac{c}{u}, R\right)_\infty.
\end{equation}
For $c=0$ the block remains regular; for $c\ne0$ its image has slope one.

$\bullet \ \mathscr F^{(\infty,\infty)}:$ for a block
\(
E
=
\operatorname{El}(\rho,\varphi,R)_\infty,
\
q>p,
\)
the local transform $\mathscr F^{(\infty,\infty)}$ is given by
\begin{equation}
\begin{aligned}
\widehat\rho
&=
-\frac{\rho'}{\varphi'\rho^2},
\\
\widehat\varphi
&=
\varphi+\frac{\rho}{\rho'}\varphi',
\\
\widehat R
&=
R\otimes L_q,
\end{aligned}
\label{eq:fourier-infinity-greater-one-final}
\end{equation}
and
\[
\operatorname{ord}\widehat\rho=q-p,
\qquad
\operatorname{slope}\mathscr F^{(\infty,\infty)} E
=
\frac{q}{q-p}>1.
\]

$\bullet \ \mathscr F^{(\infty,0)}:$ if
\(
E
=
\operatorname{El}(\rho,\varphi,R)_\infty,
\
0<q<p,
\)
then $E$ is sent to the point $0$, and
\(
\mathscr F^{(\infty,0)}E
\simeq
\operatorname{El}
(\widehat\rho,\widehat\varphi,\widehat R)_0,
\)
where
\begin{equation}
\begin{aligned}
\widehat\rho
&=
-\rho^2\frac{\varphi'}{\rho'},
\\
\widehat\varphi
&=
\varphi+\frac{\rho}{\rho'}\varphi',
\\
\widehat R
&=
R\otimes L_q.
\end{aligned}
\label{eq:fourier-infinity-less-one-final}
\end{equation}
Here
\[
\operatorname{ord}\widehat\rho=p-q,
\qquad
\operatorname{slope}\mathscr F^{(\infty,0)} E
=
\frac{q}{p-q}.
\]

$\bullet \ \mathscr F^{(\infty,\widehat c)}:$ finally, for
\(\operatorname{El}(\rho,\varphi,R)_\infty\) with \(q=p\), set
\begin{equation}
\widehat c
=
\left.
\left(
-\rho^2\frac{\varphi'}{\rho'}
\right)
\right|_{u=0},
\label{eq:fourier-slope-one-point}
\end{equation}
and
\begin{equation}
\widehat\rho
=
-\rho^2\frac{\varphi'}{\rho'}
-\widehat c,
\qquad
\widehat\varphi
=
\varphi+\frac{\rho}{\rho'}\varphi'.
\label{eq:fourier-slope-one-data}
\end{equation}
The target point is the finite point \(\widehat c\). If
\(\widehat\varphi\notin\mathbb C[[u]]\), set
\(\widehat q=-\operatorname{ord}\widehat\varphi\). Then
\begin{equation}
\mathscr F^{(\infty,\widehat c)}
\operatorname{El}(\rho,\varphi,R)_\infty
\simeq
\operatorname{El}
\left(
\widehat\rho,
\widehat\varphi,
R\otimes L_{\widehat q}
\right)_{\widehat c}.
\label{eq:fourier-slope-one-irregular}
\end{equation}

If \(\widehat\varphi\in\mathbb C[[u]]\), minimality implies
\(p=1\) and \(\varphi=\widehat c/u\). In this case,
\begin{equation}
\mathscr F^{(\infty,\widehat c)}
\operatorname{El}(u,\widehat c/u,R)_\infty
\simeq
\operatorname{El}(u, 0, R)_{\widehat c}.
\label{eq:fourier-slope-one-regular}
\end{equation}

$\bullet$ A regular connection at infinity is sent to a regular connection at \(0\):
\begin{equation}
\mathscr F^{(\infty,0)}
R_\infty
\simeq
R_0,
\label{eq:fourier-regular-infinity}
\end{equation}
where \(R_0\simeq R_\infty\) after the natural renaming of the formal parameter.

The other generators act in a straightforward way. For the triangular automorphism
\[
\sigma_P(x)=x,
\qquad
\sigma_P(\partial_x)=\partial_x+P(x),
\qquad
P(x)=\mathcal P'(x),
\]
the transformed module is
\(
\sigma_P\mathcal M
\simeq
\mathcal M\otimes E^{\mathcal P(x)}.
\)
At a finite point $c$, the series $\mathcal P(c+t_c)$ lies in $\mathbb C[[t_c]]$, so the exponential
connection $E^{\mathcal P(c+t_c)}$ is trivial. Hence
\begin{equation}
\Psi(\sigma_P\mathcal M_c)
\simeq
\Psi(\mathcal M_c),
\qquad
\Phi\bigl((\sigma_P\mathcal M)_c\bigr)
\simeq
\Phi(\mathcal M_c).
\label{eq:sigma-Psi-Phi}
\end{equation}
On elementary blocks, the action is
\begin{equation}
\operatorname{El}(\rho,\varphi,R)_c
\longmapsto
\operatorname{El}
\bigl(
\rho,
\varphi+\mathcal{P} (c+\rho),
R
\bigr)_c
\simeq
\operatorname{El}(\rho,\varphi,R)_c.
\label{eq:sigma-finite-final}
\end{equation}

The situation at infinity is similar. If \(\mathcal P\) is nonconstant, then \(\mathcal P(t_\infty^{-1})\) has a nonzero polar part. Since the germ at infinity is already a formal meromorphic connection over \(\mathbb C((t_\infty))\), an analogue of~\eqref{eq:sigma-Psi-Phi} holds. Writing the connection as a sum of elementary blocks, we obtain
\begin{equation}
\operatorname{El}(\rho,\varphi,R)_\infty
\longmapsto
\operatorname{El}
\left(
\rho,
\varphi+\mathcal P\left(\frac1\rho\right),
R
\right)_\infty.
\label{eq:sigma-infinity-final}
\end{equation}
\begin{remark}\label{rem:trans}
The formula above shows that an irregular elementary block can become regular under $\sigma_P$ only if its minimal presentation is unramified.
\end{remark}

The scaling
\[
\xi_\alpha(x)=\alpha x,
\qquad
\xi_\alpha(\partial_x)=\alpha^{-1}\partial_x,
\qquad
\alpha\in\mathbb C^\times,
\]
is the change of coordinate $x_\alpha=\alpha x$. The point $c$ is sent to
$\alpha c$, and
\(
t_{\alpha c}
=
x_\alpha-\alpha c
=
\alpha t_c.
\)
After identifying the fields by
$t_{\alpha c}=\alpha t_c$, we obtain
\begin{equation}
\Psi\bigl((\xi_\alpha\mathcal M)_{\alpha c}\bigr)
\simeq
\Psi(\mathcal M_c),
\qquad
\Phi\bigl((\xi_\alpha\mathcal M)_{\alpha c}\bigr)
\simeq
\Phi(\mathcal M_c),
\label{eq:scaling-Psi-Phi}
\end{equation}
and
\begin{equation}
\operatorname{El}(\rho,\varphi,R)_c
\longmapsto
\operatorname{El}
(\alpha\rho,\varphi,R)_{\alpha c}.
\label{eq:scaling-finite-final}
\end{equation}

At infinity, $t_\infty\mapsto \alpha^{-1}t_\infty$, so an analogue of~\eqref{eq:scaling-Psi-Phi} holds, and
\begin{equation}
\operatorname{El}(\rho,\varphi,R)_\infty
\longmapsto
\operatorname{El}
(\alpha^{-1}\rho,\varphi,R)_\infty.
\label{eq:scaling-infinity-final}
\end{equation}

\section{The decorated space \texorpdfstring{$\widetilde{J}^\infty$}{J-tilde infinity}}
\label{sec:J-infinity}
In this section we define $\widetilde{J}^\infty$, the space of pairs consisting of a truncated jet of an affine plane curve at infinity and a regular \(\mathbb C ((t))\)-connection.
This space will give a complete description of \(\Phi(\mathcal M_c)\) in~\S 4. We also define an action of automorphisms of \(\mathbb A ^2\) on $\widetilde{J}^\infty$. Its compatibility with the action of \(\operatorname{Aut} W_{1, \mathbb C}\) will be proved in the next section.

\subsection{The space of truncated jets of plane curves}
The space of truncated jets of affine plane curves $J^\infty$ was introduced by M.~Kontsevich in~\cite{Kon09}, in the context of $p$-determinants of $D$-modules in one variable.
\begin{Cit}[{\cite[{\S 3.4}]{Kon09}}]
Let $\mathbf{k}$ be an algebraically closed field of characteristic zero. The set of planar curves $Curves_{\mathbb{A}_{\mathbf{k}}^{2}}$ understood as effective divisors in $\mathbb{A}_{\mathbf{k}}^{2}$, is the same as the quotient of the set of non-zero polynomials $P \in \mathbf{k}[x, y] \setminus \{0\}$ modulo multiplicative constant. Hence it carries a natural structure of an ind-scheme, it is an infinite-dimensional projective space
\[Curves_{\mathbb{A}_{\mathbf{k}}^{2}} = \mathbb{P}^{\infty}(\mathbf{k}) = \mathbb{P}(\mathbf{k}[x,y]) = \lim_{\rightarrow}\mathbb{P}^{\frac{(p + 1)(p + 2)}{2} -1}(\mathbf{k}).\]
The group $\operatorname {Aut} \mathbb{A}_{\mathbf{k}}^{2}$ acts by automorphisms of this ind-scheme.

Our goal here is to introduce an equivalence relation on $Curves_{\mathbb{A}_{\mathbf{k}}^{2}}$ invariant under $\operatorname {Aut} \mathbb{A}_{\mathbf{k}}^{2}$ such that all equivalence classes will be sets of $\mathbf{k}$-points of constructible sets, and for any cyclic $\mathcal{D}(\mathbb{A}_{\mathbf{k}}^{1})$-module $M$ its arithmetic support will belong to one such an equivalence class. These equivalence classes we will call logarithmic families because they have a characterization in terms logarithmic divergence of certain integrals.

First, we introduce certain set $J_{\mathbf{k}}^{\infty}$ associated with $\mathbb{A}_{\mathbf{k}}^{2}$. It can be thought as truncated jets of algebraic curves in $\mathbb{P}_{\mathbf{k}}^{2}$ intersecting the projective line at infinity $\mathbb{P}_{\mathbf{k}}^{1} = \mathbb{P}_{\mathbf{k}}^{2} \setminus \mathbb{A}_{\mathbf{k}}^{2}$.
\end{Cit}

We give a slightly different definition of \(J^\infty\) and, following Kontsevich's ideas, show that the space of truncated jets of affine plane curves is well suited to describing elementary blocks.

\begin{definition}
A \emph{jet of an affine plane curve at infinity} is a triple $(C,\phi,p)$, where $C$ is a smooth complete curve, $\phi\colon C\dashrightarrow\mathbb A^2$ is a rational map birational onto its image, and $p\in C$ is a point at which $\phi$ is not defined as a map to $\mathbb A^2$ and which is sent to $\mathbb P^2\setminus\mathbb A^2$ by its extension. We denote this extension by
\(
\widehat\phi\colon C\longrightarrow\mathbb P^2
\).
\end{definition}

For a smooth point $p$ of $C$ and any local parameter $s$, we have
\[
\widehat{\mathscr O}_{C,p}\simeq\mathbb C[[s]],
\qquad
\operatorname{Frac}\widehat{\mathscr O}_{C,p}\simeq\mathbb C((s)).
\]
Thus the coordinate functions of such a jet may be regarded as Laurent series in $s$. We write $f(s)=O(s^N)$ to mean $f\in s^N\mathbb C[[s]]$.
\begin{definition}
\label{J_def}
Let $(C_I,\phi_I,p_I)$ and $(C_{II},\phi_{II},p_{II})$ be two jets of affine plane curves at infinity. We call them \emph{logarithmically equivalent} if there are coordinates $(x_0,y_0)$ obtained from $(x,y)$ by a polynomial automorphism of $\mathbb A^2$, integers $D_I,D_{II}\in\mathbb Z_{>0}$, local parameters $s_I,s_{II}$, and series
\(
f_I,f_{II}\in\mathbb C((z))
\)
such that
\begin{equation}
\phi_i^*x_0=s_i^{-D_i},
\qquad
\phi_i^*y_0=f_i(s_i),
\qquad
i\in\{I,II\}.
\end{equation} Moreover, writing
$L:=\operatorname{lcm}(D_I,D_{II})$ and $\mu_L:=\{\zeta\in\mathbb C^\times\mid \zeta^L=1\}$, we require that, for some $\mu\in\mu_L$,
\[
f_I\big(z^{L/D_I}\big)-f_{II}\big((\mu z)^{L/D_{II}}\big)=O(z^L).
\]
When these conditions hold, we write
\(
(C_I,\phi_I,p_I)\sim(C_{II},\phi_{II},p_{II}).
\)
\end{definition}
\begin{definition}
For a given jet of an affine plane curve, a system of polynomial coordinates $(X,Y)$ is called \emph{good} if
\(
\operatorname{ord}\phi^*X<0.
\)
\end{definition}
In the definition of logarithmic equivalence, the two jets are compared in a common good coordinate system.

The geometric idea behind this definition is the following. Two jets of affine plane curves are equivalent if, in suitable coordinates, they both have the form $$\left(x, y(x)\right) \qquad y=a_{p-1} x^{\frac{p-1}{p}}+\ldots+a_{1-p}x^{-\frac{p-1}{p}}+O(1/x),$$ with the same coefficients, where we also identify expressions related by $x^{1/p}\mapsto\mu_px^{1/p}$ with $(\mu_p)^p=1$. This is precisely the definition in~\cite{Kon09}.

To show that logarithmic equivalence is an equivalence relation, we first establish the following technical lemmas.
\begin{lemma}
\label{lem:formal-estimates}
Let $p,q\in\mathbb C((u))$ satisfy
\[
p-q=O(u^N),
\qquad
\operatorname{ord}p=\operatorname{ord}q=r,
\qquad
N>r.
\]

\begin{enumerate}
\item
Let $\alpha\in\mathbb Q$, and suppose that $p^\alpha$ and $q^\alpha$ are defined in the field of Puiseux series, with compatible choices of roots of the leading coefficients. Then
\[
p^\alpha-q^\alpha
=
O\bigl(u^{N+(\alpha-1)r}\bigr).
\]

\item
Suppose instead that
\(
p,q\in u\mathbb C[[u]],\;
\operatorname{ord}p=\operatorname{ord}q=1,
\)
and let $p^{\operatorname{inv}}$ and $q^{\operatorname{inv}}$ be their compositional inverses. Then
\[
p^{\operatorname{inv}}-q^{\operatorname{inv}}
=
O(u^N).
\]
\end{enumerate}
\end{lemma}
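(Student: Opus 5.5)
For part (1), I will normalize the leading terms and reduce to a binomial expansion. Write $p=a u^r(1+\tilde p)$ and $q=a u^r(1+\tilde q)$, with the same leading coefficient $a\neq 0$ and $\tilde p,\tilde q\in u\mathbb C[[u]]$. The leading coefficients agree because $p-q=O(u^N)$ with $N>r$. Then
\[
\tilde p-\tilde q=a^{-1}u^{-r}(p-q)=O(u^{N-r}).
\]
With compatible choices of roots we have $p^\alpha=a^\alpha u^{\alpha r}(1+\tilde p)^\alpha$, and likewise for $q$. Here $(1+w)^\alpha=\sum_k\binom{\alpha}{k}w^k$ is the binomial series, which is well defined on $u\mathbb C[[u]]$. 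It suffices to show
\[
(1+\tilde p)^\alpha-(1+\tilde q)^\alpha=O(u^{N-r}).
\]
Multiplying by $a^\alpha u^{\alpha r}$ then gives order $\ge N-r+\alpha r=N+(\alpha-1)r$, as claimed.

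For that estimate I will use the identity
\[
\tilde p^k-\tilde q^k=(\tilde p-\tilde q)\sum_{i+j=k-1}\tilde p^i\tilde q^j.
\]
Each term in the sum has nonnegative order, so $\tilde p^k-\tilde q^k=O(u^{N-r})$ for every $k$. Moreover, both $\tilde p^k$ and $\tilde q^k$ are $O(u^k)$, so for a given power of $u$ only finitely many $k$ contribute, and the series converges $u$-adically. Since $s\mathbb C[[u]]$-type ideals $u^{M}\mathbb C[[u]]$ are closed in the $u$-adic topology, summing over $k$ gives the bound. If $r$ is not divisible by the denominator of $\alpha$, the same argument works in $\mathbb C((u^{1/m}))$ for suitable $m$. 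Orders are then rational, but all the estimates are unchanged.

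For part (2), let $P=p^{\operatorname{inv}}$ and $Q=q^{\operatorname{inv}}$. Both have order $1$ and the same linear coefficient $1/p_1$; this holds because $N>1$ forces $p_1=q_1$. I will compare compositions:
\[
u=p(P(u))=q(Q(u)),
\]
so
\[
p(P)-p(Q)=q(Q)-p(Q)=(q-p)(Q).
\]
Since $Q$ has order $1$, the right-hand side is $O(u^N)$. For the left-hand side, write $p(v)=\sum_k p_kv^k$ and factor each term as
\[
p_k\,(P^k-Q^k)=p_k\,(P-Q)\sum_{i+j=k-1}P^iQ^j.
\]
This gives $p(P)-p(Q)=(P-Q)\cdot S$, where $S=\sum_k p_k\sum_{i+j=k-1}P^iQ^j$. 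The sum $S$ converges because $P$ and $Q$ lie in $u\mathbb C[[u]]$, and its constant term is $p_1\neq0$, so $S$ is a unit. Hence
\[
P-Q=S^{-1}(q-p)(Q)=O(u^N).
\]

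The only delicate points are bookkeeping ones: making the root choices compatible, and handling fractional orders in part (1). Neither is a genuine obstacle. I expect the unit argument in part (2) to be the main step.
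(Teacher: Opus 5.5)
Your proof is correct and follows the paper's argument. Part (2) is the paper's factorization $p(X)-p(Y)=(X-Y)A(X,Y)$ with $A(P,Q)$ a unit, and part (1) is the same binomial-series reduction; the only cosmetic difference is that the paper writes $q=p(1+f)$ with $f=(q-p)/p$ of order at least $N-r$, so it needs only $(1+f)^\alpha-1=O(f)$ instead of your term-by-term comparison of two binomial series.
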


\begin{proof}
For the first assertion, write
\[
q=p\cdot(1+f),
\qquad
f=\frac{q-p}{p}.
\]
Then
\(
\operatorname{ord}f\ge N-r>0.
\)
With compatible choices of roots, we have
\[
q^\alpha=p^\alpha\cdot(1+f)^\alpha.
\]
Since
\[
(1+f)^\alpha-1=O(f),
\]
we obtain
\[
q^\alpha-p^\alpha
=
p^\alpha O(f)
=
O\bigl(u^{\alpha r}\bigr)O\bigl(u^{N-r}\bigr)
=
O\bigl(u^{N+(\alpha-1)r}\bigr).
\]

For the second assertion, set
\(
P=p^{\operatorname{inv}},
\;
Q=q^{\operatorname{inv}}.
\)
We have
\[
0=p(P)-q(Q)
=
\bigl(p(P)-p(Q)\bigr)+\bigl(p(Q)-q(Q)\bigr).
\]
There is a series $A(X,Y)\in\mathbb C[[X,Y]]$ such that
\[
p(X)-p(Y)=(X-Y)A(X,Y),
\qquad
A(0,0)=p'(0)\ne0.
\]
Hence
\[
(P-Q)A(P,Q)
=
-\bigl(p(Q)-q(Q)\bigr).
\]
Since $p-q=O(u^N)$ and $\operatorname{ord}Q=1$,
\[
p(Q)-q(Q)=O(u^N).
\]
The series $A(P,Q)$ is a unit, so
\[
P-Q=O(u^N).
\]

\end{proof}

We next show that any two good coordinate systems can be joined by a chain of polynomial transformations along which the coordinate system remains good.

\begin{lemma}
\label{lem:good-coordinate-chain}
Let $(X,Y)$ and $(X',Y')$ be two good coordinate systems for a fixed jet of an affine plane curve at infinity. Then there is a chain of good coordinate systems
\[
(X,Y)=(X_0,Y_0),\ (X_1,Y_1),\ldots,\ (X_N,Y_N)=(X',Y')
\]
in which each transition has one of the following forms:
\[
\tag{E1}
(U,V)\longmapsto(U,aV+F(U)),
\quad
a\in\mathbb C^\times,
\quad
F\in\mathbb C[t],
\]
\[
\tag{E2}
(U,V)\longmapsto(aU+F(V),V),
\quad
a\in\mathbb C^\times,
\quad
F\in\mathbb C[t],
\quad
\operatorname{ord}\phi^*\bigl(aU+F(V)\bigr)<0,
\]
\[
\tag{S}
(U,V)\longmapsto(V,U),
\quad
\operatorname{ord}\phi^*V<0.
\]
\end{lemma}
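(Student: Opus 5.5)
The plan is to reduce the statement to the Jung--van der Kulk theorem and then repair the intermediate coordinate systems that fail to be good by inserting swaps. Two facts are needed.

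The first fact is that \emph{every} polynomial coordinate system $(U,V)$ satisfies $\min(\operatorname{ord}\phi^*U,\operatorname{ord}\phi^*V)<0$. If both pullbacks were regular at $p$, then $\phi$ would be defined at $p$ as a map to $\mathbb A^2$ (the coordinate ring is $\mathbb C[U,V]$), contradicting the definition of a jet at infinity. Hence, for any coordinate system $c=(U,V)$, either $c$ is good or its swap $\bar c:=(V,U)$ is good.

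The second fact is the Jung--van der Kulk theorem: $\operatorname{Aut}(\mathbb A^2)$ is generated by triangular maps $(U,V)\mapsto(U,aV+F(U))$ and the swap $(U,V)\mapsto(V,U)$. Affine maps decompose into these: translations and diagonal scalings are triangular up to swaps, and $\mathrm{GL}_2$ is generated by elementary matrices, scalings and the swap. So the transition from $(X,Y)$ to $(X',Y')$ can be written as a chain $c_0=(X,Y),c_1,\dots,c_N=(X',Y')$ in which each step is either a triangular move $T$ of type (E1) (ignoring goodness) or a swap, but some intermediate $c_k$ may fail to be good.

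Now set $c_k'=c_k$ if $c_k$ is good, and $c_k'=\bar c_k$ otherwise; the endpoints are unchanged. I will check that each transition $c_{k-1}'\to c_k'$ is the identity or an allowed move.

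\emph{Swap step}, $c_k=\bar c_{k-1}$. If exactly one of $c_{k-1},c_k$ is good, then $c'_k=c'_{k-1}$ and the step is dropped. If both are good, the swap lands in the good system $c_k$, so (S) applies.

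\emph{Triangular step}, $c_{k-1}=(U,V)$ and $c_k=(U,aV+F(U))$. The first coordinate is unchanged, so $c_{k-1}$ and $c_k$ are simultaneously good or bad. If both are good, the step is (E1). If both are bad, then $\operatorname{ord}\phi^*U\ge0$, so by the first fact $\operatorname{ord}\phi^*V<0$ and $\operatorname{ord}\phi^*(aV+F(U))<0$. Then
\[
c'_{k-1}=(V,U)\longmapsto c'_k=(aV+F(U),U),
\]
which is exactly a move of type (E2) whose new first coordinate has negative order, as (E2) requires.

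Discarding trivial steps gives the required chain. The only nonformal ingredient, and the main point to cite carefully, is the generation statement for $\operatorname{Aut}(\mathbb A^2)$ (Jung--van der Kulk), including the decomposition of the affine part into triangular moves and swaps. Everything else is the case check above, which rests on the observation that a jet at infinity cannot have both coordinates regular at $p$.
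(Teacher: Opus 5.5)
Your argument is correct and is essentially the paper's proof. Both factor the transition via Jung--van der Kulk and, whenever an intermediate system is not good, replace it by its swap. This is the paper's ``write the current coordinates in the opposite order and conjugate the next factor by the interchange'', and it rests on the same fact that some coordinate must have a pole at $p$. The difference is only organizational: you first reduce the affine factors to moves of type (E1) and swaps and then run a uniform two-case check, whereas the paper treats affine maps and maps $(aU+F(V),V)$ by a direct case analysis.
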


\begin{proof}
By the van der Kulk theorem~\cite{Kul53}, the automorphism taking $(X,Y)$ to $(X',Y')$ factors into affine and elementary triangular automorphisms. We express these in terms of \textup{(E1)}, \textup{(E2)}, and~\textup{(S)}.

The map $(U,V)\mapsto(U,aV+F(U))$ already has the form \textup{(E1)}. The map $(U,V)\mapsto(aU+F(V),V)$ has the form \textup{(E2)} if its first coordinate has negative order. Otherwise $\operatorname{ord}V<0$, since the jet lies at infinity, and the required map can be written, up to interchanging the coordinates, as
$$(U,V)\mapsto(V,U)\mapsto(V,aU+F(V)).$$

It remains to consider an affine transformation $(U,V)\mapsto(P,Q)$, where
$P=\alpha U+\beta V+\varepsilon$, $Q=\gamma U+\delta V+\kappa$, and
$\Delta=\alpha\delta-\beta\gamma\ne0$. If
$\operatorname{ord}P<0$ and $\alpha\ne0$, use the chain
$(U,V)\mapsto(P,V)\mapsto(P,Q)$. It is easy to check that the second step has the form \textup{(E1)}.

If $\alpha=0$, then $P=\beta V+\varepsilon$ and
$\operatorname{ord}V<0$. We can use
$(U,V)\mapsto(V,U)\mapsto(P,U)\mapsto(P,Q)$; again the last step has the form \textup{(E1)}.

If $\operatorname{ord}P\ge0$, then
$\operatorname{ord}Q<0$, and the same construction applies to
$(Q,P)$.

At each stage of the factorization, we write the current coordinates either in the required order or in the opposite order. In the latter case, we conjugate the next factor by the coordinate interchange; it remains affine or elementary triangular. At the end we obtain either $(X',Y')$ or $(Y',X')$. In the second case, apply \textup{(S)}, which is allowed because $$\operatorname{ord}X'<0.$$
\end{proof}

We now check that each allowed transition preserves logarithmic equivalence.
\begin{lemma}\label{lem:elem_operations}
Suppose two jets of affine plane curves are equivalent in good coordinates $(U,V)$, and $(X,Y)$ is obtained by one of the transitions \textup{(E1)}, \textup{(E2)}, or~\textup{(S)} that is allowed for the first jet. Then $(X,Y)$ is good for the second jet as well, and the jets remain equivalent in these coordinates.
\end{lemma}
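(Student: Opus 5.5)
We treat the three transitions separately, working throughout with the normal form of Definition~\ref{J_def}. In the coordinates $(U,V)$ both jets satisfy $\phi_i^*U=s_i^{-D_i}$ and $\phi_i^*V=f_i(s_i)$. With $L=\operatorname{lcm}(D_I,D_{II})$, substitute $s_I=z^{L/D_I}$ and $s_{II}=(\mu z)^{L/D_{II}}$. Then both jets become Laurent series in a single variable $z$. In both, $U=z^{-L}$ (note $\mu^L=1$), and $V_I-V_{II}=O(z^L)$. The goal in each case is to find a new normal form for the new coordinates $(X,Y)$: a new parameter $\tilde s_i$ with $\phi_i^*X=\tilde s_i^{-\tilde D_i}$, and to check the $O(\tilde z^{\tilde L})$ condition.

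\textbf{Transition (E1).} Here $X=U$ is unchanged and $Y=aV+F(U)$. The difference is $Y_I-Y_{II}=a(V_I-V_{II})=O(z^L)$, since the terms $F(z^{-L})$ are identical and cancel. So (E1) is immediate.

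\textbf{Transition (E2).} Here $X=aU+F(V)$ and $Y=V$. Goodness for the first jet means $\operatorname{ord}_z X_I<0$; write $\operatorname{ord}_z X_I=-M$. The key estimate is
\[
X_I-X_{II}=F(V_I)-F(V_{II})=(V_I-V_{II})\cdot G,
\]
where $G$ has order at least $(\deg F-1)\operatorname{ord}V$. This seems too weak, since $V$ may have a large pole. The fix is to use the coordinate $U$: the series $V_i$ agree modulo $z^L$, and $U=z^{-L}$. So I will compare everything in terms of the order relative to $U$. The natural intermediate claim is that two jets are equivalent in $(U,V)$ if and only if $V_I-V_{II}$ has order $\ge L$ in $z$, where $z^{-L}=U$. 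Equivalently, every polynomial $H(U,V)$ satisfies $H_I-H_{II}=O(z^{L+\operatorname{ord}H_I})$ up to the correct bookkeeping. Checking this requires care, because $\partial H/\partial V$ could have a pole as large as $\operatorname{ord}H$ plus the pole of $V$. So instead I will prove the equivalent and cleaner statement that the difference is small relative to the leading term in the sense needed.

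Then set $\tilde s_i$ by $X_i=\tilde s_i^{-M'}$, where $M'$ is the pole order of $X_i$ in the original parameter. This uses Lemma~\ref{lem:formal-estimates}(1) with $\alpha=-1/M$ to get $\tilde z$ with $\tilde z_I-\tilde z_{II}$ small. Then apply Lemma~\ref{lem:formal-estimates}(2) to invert and express $z$ through $\tilde z$, and substitute into $Y=V$.

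\textbf{Transition (S).} This is the special case $a=0$ in spirit: $X=V$, $Y=U$, where $\operatorname{ord}V<0$. It is handled by the same root/inversion argument. Since $Y=z^{-L}$ is exact, only the reparametrization error matters.

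The main obstacle is the exponent bookkeeping in (E2) and (S). One needs the error in $\tilde z$ from Lemma~\ref{lem:formal-estimates} to be $O(\tilde z^{1+\tilde L})$ exactly, so that after composing with $Y$, whose pole has order at most $\tilde L-1$ relative to $\tilde z$, the difference still lands in $O(\tilde z^{\tilde L})$. I also expect to need that the new $\tilde D_i$ have $\operatorname{lcm}$ consistent with a common root of unity $\tilde\mu$. This should follow because $X_I$ and $X_{II}$ share their leading term, so the compatible choices of roots in Lemma~\ref{lem:formal-estimates}(1) can be made.
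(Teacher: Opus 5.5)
Your (E1) argument is fine. The root-and-invert strategy also works for (S), and for (E2) when $\operatorname{ord}V\ge 0$: there the $F(V_i)$ are power series, $F(V_I)-F(V_{II})=O(z^L)$, and $X$ has pole order exactly $L$ for both jets, so both jets are good. Substituting the inverted parameters back into the power series $V_i$ then loses nothing.

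The genuine gap is (E2) with $\operatorname{ord}V<0$. This is the case you flag and then leave as ``the difference is small relative to the leading term in the sense needed''. Here no chain of order estimates from Lemma~\ref{lem:formal-estimates} can succeed, because the required bound comes from cancellation. For example, take $U=z^{-1}$ and $V_i=z^{-5}+e_iz$ with $e_I\ne e_{II}$; these are equivalent in $(U,V)$ with $L=1$. Put $(X,Y)=(U+V^2,V)$. Then $X_I-X_{II}=2(e_I-e_{II})z^{-4}+\dots$. Writing $X_i=\tilde z_i^{-10}$, Lemma~\ref{lem:formal-estimates}(1) gives $\tilde z_I-\tilde z_{II}$ of exact order $7$. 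Now split
\[
Y_I-Y_{II}=\bigl(V_I(g_I)-V_{II}(g_I)\bigr)+\bigl(V_{II}(g_I)-V_{II}(g_{II})\bigr),
\]
where $g_i$ are the inverse series. Each bracket has exact order $1$ in $\tilde z$, whereas you need $O(\tilde z^{10})$. The statement is true, but the two brackets must cancel through order $9$, and your estimates cannot see this. Your suggested intermediate bound $H_I-H_{II}=O(z^{L+\operatorname{ord}H_I})$ would give even less.

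The missing idea, which is what the paper does, is to avoid estimating this case directly. Prove (S) first, and when $\operatorname{ord}V<0$ factor (E2) as (S), then (E1), then (S):
\[
(U,V)\longmapsto(V,U)\longmapsto(V,aU+F(V))\longmapsto(aU+F(V),V).
\]
The first swap is allowed because $\operatorname{ord}V<0$. The last is allowed because $\operatorname{ord}(aU+F(V))<0$ is exactly the hypothesis that (E2) is allowed. After the first swap both jets have $V=\tau^{-Q}$ exactly, so $F(V)$ is literally the same series for both and cancels, as in (E1).

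There are three smaller points. First, your bookkeeping for (S) is wrong as stated. After reparametrizing, $Y=U$ becomes $h_i(\tau)^{-L}$, which has a pole of order $L$. With an error $h_I-h_{II}=O(\tau^{1+\tilde L})$, where $\tilde L=Q$, you would only get $h_I^{-L}-h_{II}^{-L}=O(\tau^{Q-L})$. The pole order of $Y$ must be added to the required error, not bounded by $\tilde L-1$. What you need is $h_I-h_{II}=O(\tau^{L+Q+1})$. Lemma~\ref{lem:formal-estimates}(1) with $\alpha=-1/Q$, followed by part (2), gives exactly this, and hence $h_I^{-L}-h_{II}^{-L}=O(\tau^{Q})$.

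Second, the compatibility of exponents you say you ``expect'' is a one-line check. We have $Q=k_Iq_I=k_{II}q_{II}$ with $k_i=L/D_i$ coprime, so $Q=\operatorname{lcm}(q_I,q_{II})$.

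Third, goodness of $(X,Y)$ for the second jet should be stated explicitly. In (S) it is immediate from $V_I-V_{II}=O(z^L)$. In (E2) it follows from the two cases above.
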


\begin{proof}
Choose parameters $s_i$ such that $U_i(s_i)=s_i^{-D_i}$. Set
$L=\operatorname{lcm}(D_1,D_2)$ and $k_i=L/D_i$. By definition, there is a
$\zeta\in\mu_L$ such that, with $\varepsilon_1=1$ and
$\varepsilon_2=\zeta$, we have
\[
A_i(t):=V_i\bigl((\varepsilon_i t)^{k_i}\bigr),\qquad i\in\{1,2\},
\qquad
A_1(t)-A_2(t)=O(t^L),
\]
and the same substitutions make both first coordinates equal to $t^{-L}$.

For \textup{(E1)}, the first coordinates remain $t^{-L}$, and
the difference of the second coordinates is $a(A_1-A_2)=O(t^L)$, since the term $F(t^{-L})$ is the same for both series.

Consider \textup{(S)}. We know that $\operatorname{ord}V_1<0$. Since $A_1(t)$ and $A_2(t)$ agree modulo $t^L$, we have $\operatorname{ord}_tA_1=\operatorname{ord}_tA_2=-Q<0$. If $q_i=-\operatorname{ord}_{s_i}V_i$, then $Q=k_iq_i$ and $Q=\operatorname{lcm}(q_1,q_2)$.

Choose roots of the leading coefficients compatibly, and set
$w_i=A_i^{-1/Q}$. Then $w_i\in t\mathbb C[[t]]$,
$\operatorname{ord}w_i=1$, and Lemma~\ref{lem:formal-estimates} gives
$w_1-w_2=O(t^{L+Q+1})$. The compositional inverses
$h_i=w_i^{\operatorname{inv}}$ satisfy the same estimate. Set $t=h_i(\tau)$. The new first coordinates are $\tau^{-Q}$, and the new second coordinates are $h_i(\tau)^{-L}$. Since $(L+Q+1)-L-1=Q$, we get \[
h_1(\tau)^{-L}-h_2(\tau)^{-L}=O(\tau^Q).
\]

For \textup{(E2)}, first suppose that
$\operatorname{ord}V_1<0$. The transition then factors as
\[
(U,V)\longmapsto(V,U)\longmapsto(V,aU+F(V))
\longmapsto(aU+F(V),V).
\]
Equivalence follows from the cases \textup{(E1)} and \textup{(S)} already considered.

Now suppose that $\operatorname{ord}V_1\ge0$. Then $\operatorname{ord}V_2\ge0$ as well, and
$A_1,A_2\in\mathbb C[[t]]$. After\[(U,V)\mapsto(aU+F(V),V),\] we have $X_i(t)=at^{-L}+F(A_i(t))$. Choose $c\in\mathbb C$ such that
$c^{-L}=a$, and
set
\[
w_i(t)=ct\bigl(1+a^{-1}t^LF(A_i(t))\bigr)^{-1/L},
\qquad X_i(t)=w_i(t)^{-L}.
\]
The estimate $F(A_1)-F(A_2)=O(t^L)$ implies
$w_1-w_2=O(t^{2L+1})$, so the inverses
$h_i=w_i^{\operatorname{inv}}$ also satisfy
$h_1(\tau)-h_2(\tau)=O(\tau^{2L+1})$. After the substitutions $t=h_i(\tau)$,
the first coordinates are $\tau^{-L}$, and
\[
\begin{aligned}
A_1(h_1)-A_2(h_2)
&=\bigl(A_1(h_1)-A_2(h_1)\bigr)
  +\bigl(A_2(h_1)-A_2(h_2)\bigr)\\
&=O(\tau^L).
\end{aligned}
\]
Indeed, the first term is $O(h_1^L)=O(\tau^L)$, while
$A_2(h_1)-A_2(h_2)$ is divisible by $h_1-h_2=O(\tau^{2L+1})$. Since
$A_2\in\mathbb C[[t]]$, this difference has order at least $2L+1$.

\end{proof}

\begin{corollary} \label{lem:coord_independence}
If two jets of affine plane curves are equivalent in some good polynomial coordinate system, then every polynomial coordinate system $(X,Y)$ that is good for one of them is good for the other, and the jets are equivalent in that system.
\end{corollary}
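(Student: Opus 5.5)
The corollary follows by combining Lemma~\ref{lem:good-coordinate-chain} with Lemma~\ref{lem:elem_operations} and inducting along the chain. Suppose the two jets are equivalent in a good polynomial coordinate system $(U,V)$; by definition this system is good for both jets. Let $(X,Y)$ be a polynomial coordinate system that is good for one of the jets, say the first one (the relation in Definition~\ref{J_def} is symmetric, so we may relabel). Since $(U,V)$ and $(X,Y)$ are both good for the first jet, Lemma~\ref{lem:good-coordinate-chain} gives a chain
\[
(U,V)=(X_0,Y_0),\ (X_1,Y_1),\ \ldots,\ (X_N,Y_N)=(X,Y)
\]
of coordinate systems that are good for the first jet. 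Each transition in the chain is of type \textup{(E1)}, \textup{(E2)}, or \textup{(S)}, and is allowed for the first jet.

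We then induct on $k$, proving that the two jets are equivalent in $(X_k,Y_k)$ and that $(X_k,Y_k)$ is good for both. The base case $k=0$ is the hypothesis. For the inductive step, the jets are equivalent in the good system $(X_k,Y_k)$, and the transition to $(X_{k+1},Y_{k+1})$ is one of the three allowed moves for the first jet. Lemma~\ref{lem:elem_operations} then says that $(X_{k+1},Y_{k+1})$ is good for the second jet as well and that the jets remain equivalent there. Taking $k=N$ gives both conclusions of the corollary for $(X,Y)$.

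The only delicate point is that the chain must consist of moves allowed for a single fixed jet, so that Lemma~\ref{lem:elem_operations} applies at every step. This is guaranteed because we build the chain from the jet for which $(X,Y)$ is assumed good, and equivalence is symmetric (the root of unity $\mu$ can be replaced by its inverse after reparametrizing). I expect no further obstacle: all the analytic work has already been done in Lemmas~\ref{lem:formal-estimates} and~\ref{lem:elem_operations}.
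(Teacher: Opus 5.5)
Your proposal is correct and is exactly the argument the paper intends. The paper gives no separate proof of the corollary: it treats it as an immediate consequence of Lemma~\ref{lem:good-coordinate-chain} (a chain of allowed moves for the jet in which $(X,Y)$ is good) and Lemma~\ref{lem:elem_operations} (each allowed move transfers goodness and equivalence to the other jet), applied by induction along the chain as you do, with your symmetry remark ($\mu\mapsto\mu^{-1}$) filling in the relabeling step the paper leaves implicit.
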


Reflexivity and symmetry are immediate from the definition. The following lemma proves transitivity and completes the verification that logarithmic equivalence is an equivalence relation.

\begin{lemma}\label{thm:J_transitivity}
Logarithmic equivalence of jets of affine plane curves is transitive.
\end{lemma}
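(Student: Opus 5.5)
Suppose $J_I\sim J_{II}$ in a good coordinate system $(x_0,y_0)$ and $J_{II}\sim J_{III}$ in a good coordinate system $(x_1,y_1)$. The first step is to bring both equivalences into a single coordinate system. The system $(x_1,y_1)$ is good for $J_{II}$. By Corollary~\ref{lem:coord_independence}, applied to the pair $J_I,J_{II}$ (which are equivalent in $(x_0,y_0)$), the system $(x_1,y_1)$ is then good for $J_I$ as well, and $J_I\sim J_{II}$ holds in $(x_1,y_1)$. So it suffices to prove transitivity when all three jets are compared in one fixed good coordinate system $(x_1,y_1)$.

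In that fixed system, write $\phi_i^*x_1=s_i^{-D_i}$ and $\phi_i^*y_1=f_i(s_i)$ for $i\in\{I,II,III\}$. The parameters $s_i$ are determined by the condition on the first coordinate only up to $D_i$-th roots of unity. We must check that the hypotheses, which were stated with possibly different parameter choices in the two comparisons, refer to the same normalization. Replacing $s_i$ by $\eta s_i$ with $\eta^{D_i}=1$ only changes the root of unity $\mu$ appearing in Definition~\ref{J_def}, so we may fix the parameters $s_I,s_{II},s_{III}$ once and for all.

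Now set $L=\operatorname{lcm}(D_I,D_{II},D_{III})$ and pass to the common parameter $z$ with $s_i=(\varepsilon_i z)^{L/D_i}$. The key observation is that the condition in the definition is stable under raising the level. If
\[
f_I\big(w^{L_{12}/D_I}\big)-f_{II}\big((\mu w)^{L_{12}/D_{II}}\big)=O(w^{L_{12}}),
\qquad L_{12}=\operatorname{lcm}(D_I,D_{II}),
\]
then substituting $w=z^{L/L_{12}}$ gives a difference that is $O(z^L)$, with the root of unity now viewed in $\mu_L$. Likewise, choosing an $L$-th root of unity whose $(L/L_{12})$-th power is $\mu$ turns the substitution $(\mu w)$ into $(\mu' z)^{L/L_{12}}$.

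In this way both hypotheses become statements at level $L$ of the form
\[
F_I(z)-F_{II}(\mu_1z)=O(z^L),
\qquad
F_{II}(z)-F_{III}(\mu_2 z)=O(z^L),
\]
where $F_i(z)=f_i(z^{L/D_i})$. Replacing $z$ by $\mu_1 z$ in the second relation, which preserves $O(z^L)$, and adding the two relations gives
\[
F_I(z)-F_{III}(\mu_1\mu_2z)=O(z^L).
\]
The final step descends from $L$ back to $L_{13}=\operatorname{lcm}(D_I,D_{III})$. Both $F_I$ and $F_{III}$ are series in $z^{L/L_{13}}$. The difference $F_I(z)-F_{III}(\mu_1\mu_2 z)$ is therefore a series in $z^{L/L_{13}}$ precisely when $(\mu_1\mu_2)^{L/D_{III}}$ is an $L_{13}/D_{III}$-power-compatible root. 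When that holds, dividing exponents by $L/L_{13}$ gives an $O(w^{L_{13}})$ statement.

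The main obstacle is this descent: showing that the composite root of unity can be chosen to be (the lift of) an element of $\mu_{L_{13}}$. I would handle it by comparing the leading parts. The polar parts of $F_I$ and $F_{III}(\mu z)$ agree modulo $z^L$. The $\mu_{L}$-orbit of $F_{III}$ modulo $O(z^L)$ has stabilizer containing $\mu_{L/D_{III}}$-type substitutions, namely those acting trivially on $s_{III}$. So we may adjust $\mu_1\mu_2$ by an element of that stabilizer. A short group-theoretic check then shows that the adjusted element satisfies $\mu^{L/L_{13}}\in\mu_{L_{13}}$ after absorbing factors that act trivially on $s_I$ and $s_{III}$, which yields the required $\mu\in\mu_{L_{13}}$.
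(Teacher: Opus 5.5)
Up to the last paragraph your argument is the paper's. You move to a coordinate system that is good for the middle jet via Corollary~\ref{lem:coord_independence}, lift both relations to level $L=\operatorname{lcm}(D_I,D_{II},D_{III})$, substitute $z\mapsto\mu_1 z$ in the second relation and add. Your remark that re-choosing $s_i$ only changes the root of unity is a point the paper passes over silently.

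The gap is the descent, which you leave as a deferred ``short group-theoretic check.'' Your reason for thinking it is hard is also mistaken. The difference $F_I(z)-F_{III}(\mu_1\mu_2 z)$ is a series in $z^{L/L_{13}}$ for every $\mu_1\mu_2\in\mu_L$, not only under a compatibility condition. This is because $F_{III}(\mu z)=f_{III}\bigl(\mu^{L/D_{III}}z^{L/D_{III}}\bigr)$ and $L/L_{13}$ divides $L/D_{III}$. So the stabilizer adjustment and the comparison of leading parts address a problem that does not exist, and as written they do not amount to an argument.

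The step closes in one line, exactly as in the paper, where it appears as $\mu=(ba)^m$. Put $m=L/L_{13}$ and $\mu'=(\mu_1\mu_2)^m$. Since $(\mu_1\mu_2)^L=1$, we get $\mu'^{L_{13}}=1$, so $\mu'\in\mu_{L_{13}}$ automatically. Because $L/D_{III}=m\cdot L_{13}/D_{III}$ and $L/D_I=m\cdot L_{13}/D_I$, we have
\[
F_{III}(\mu_1\mu_2 z)=f_{III}\bigl((\mu' z^m)^{L_{13}/D_{III}}\bigr),\qquad F_I(z)=f_I\bigl((z^m)^{L_{13}/D_I}\bigr).
\]
Hence $F_I(z)-F_{III}(\mu_1\mu_2 z)=G(z^m)$ with $G(w)=f_I(w^{L_{13}/D_I})-f_{III}\bigl((\mu' w)^{L_{13}/D_{III}}\bigr)$. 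Then $G(z^m)=O(z^{mL_{13}})$ gives $G(w)=O(w^{L_{13}})$, which is precisely $J_I\sim J_{III}$ in the sense of Definition~\ref{J_def}. With this replacing your final paragraph, the proof is complete.
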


\begin{proof}
Suppose $j_1\sim j_2$ and $j_2\sim j_3$. Choose coordinates $(x,y)$ that are good for the middle jet. By Corollary~\ref{lem:coord_independence}, they are good for all three jets, and both equivalences hold in these coordinates. Fix parameters $s_i$ and integers $D_i\in\mathbb Z_{>0}$ such that
$x_i=s_i^{-D_i}$,  $y_i=Y_i(s_i)$.

Set $L_{12}=\operatorname{lcm}(D_1,D_2)$,
$L_{23}=\operatorname{lcm}(D_2,D_3)$, and
$N=\operatorname{lcm}(L_{12},L_{23})$. Let $\zeta_{12}\in\mu_{L_{12}}$ and $\zeta_{23}\in\mu_{L_{23}}$ be the roots of unity appearing in the defining equalities. Choose $a,b\in\mathbb C^\times$ such that
$a^{N/L_{12}}=\zeta_{12}$ and $b^{N/L_{23}}=\zeta_{23}$. Then
$a,b\in\mu_N$, and the corresponding substitutions give
\[
\begin{aligned}
Y_1\bigl(t^{N/D_1}\bigr)-Y_2\bigl((at)^{N/D_2}\bigr)&=O(t^N),\\
Y_2\bigl(t^{N/D_2}\bigr)-Y_3\bigl((bt)^{N/D_3}\bigr)&=O(t^N).
\end{aligned}
\]
Replacing $t$ by $at$ in the second equality and summing them up, we obtain
\[
Y_1\bigl(t^{N/D_1}\bigr)-Y_3\bigl((bat)^{N/D_3}\bigr)=O(t^N).
\]
Set $L_{13}=\operatorname{lcm}(D_1,D_3)$, $m=N/L_{13}$, and
$\mu=(ba)^m\in\mu_{L_{13}}$. The last equality has the form
$G(t^m)=O(t^{mL_{13}})$, where
$G(z)=Y_1(z^{L_{13}/D_1})-Y_3((\mu z)^{L_{13}/D_3})$.
Hence $G(z)=O(z^{L_{13}})$, and $j_1\sim j_3$ by definition.

\end{proof}

\begin{definition}
The \emph{space of truncated jets of affine plane curves at infinity} is the quotient
\[
J^\infty
=
\{\text{jets of affine plane curves at infinity}\}\big/\!\sim,
\]
where $\sim$ is the equivalence relation of Definition~\ref{J_def}.
\end{definition}

This space gives a partial description of an elementary block. More precisely, if we forget the regular connection $R$ in each elementary block, the resulting objects correspond bijectively to the elements of $J^\infty$.

\begin{definition}
A \emph{minimal triple} at a point \(c\in\mathbb P^1\) is an equivalence class of data
\[
(\rho,[\varphi],c),\qquad
\rho\in u\mathbb C[[u]]\setminus\{0\},\qquad
[\varphi]\in\mathbb C((u))/\mathbb C[[u]],
\]
admitting no factorization
\[
\rho=\widetilde\rho\circ\sigma,\qquad
[\varphi]=[\widetilde\varphi\circ\sigma],
\]
where \(\sigma\in u\mathbb C[[u]]\) is a nonzero series of order
greater than \(1\), \(\widetilde\rho\in v\mathbb C[[v]]\), and
\(\widetilde\varphi\in\mathbb C((v))\).

Two triples at \(c\) are equivalent if there is an invertible formal change of parameter \(h\in u\mathbb C[[u]]\), with \(h'(0)\ne0\), such that
\[
\rho_1=\rho_2\circ h,\qquad
[\varphi_1]=[\varphi_2\circ h].
\]
\end{definition}

A choice of parameter puts \(\rho\) in the form \(u^p\), and the class \([\varphi]\) has a representative that is a polynomial in \(u^{-1}\) with zero constant term. Minimality then means that the greatest common divisor of \(p\) and the exponents of the nonzero terms of \(\varphi\) is~\(1\). In particular, \([\varphi]=0\) implies \(p=1\).

In this notation, define the correspondence
\begin{equation}\label{eq:j-El_corr}
\begin{aligned}
(u^p,[\varphi],c),\quad c\in\mathbb C,\ [\varphi]\ne0
&\longmapsto
\left[x=c+u^p,\ y=\frac{\varphi'}{p u^{p-1}}\right],\\
(u^p,[\varphi],\infty),\quad [\varphi]\ne0
&\longmapsto
\left[x=u^{-p},\ y=-\frac{u^{p+1}\varphi'}{p}\right],\\
(u, 0, c),\quad c\in\mathbb C
&\longmapsto [(c,u^{-1})],\\
(u, 0, \infty)
&\longmapsto [(u^{-1},0)].
\end{aligned}
\end{equation}
In the first two cases, \(\mathrm{d}\varphi=y\,\mathrm{d}x\).

\begin{propos}
\label{prop:minimal-triples-J}
The correspondence~\eqref{eq:j-El_corr} is a bijection between minimal triples and elements of \(J^\infty\).
\end{propos}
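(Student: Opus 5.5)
The plan is to work throughout in the fixed coordinates $(x,y)$ of $\mathbb A^2$. By Corollary~\ref{lem:coord_independence}, whether a coordinate system is good depends only on the class in $J^\infty$, and two jets that are both good for a given system are equivalent if and only if they are equivalent in that system. So $J^\infty$ splits into two disjoint pieces:
\begin{itemize}
\item[(a)] classes for which $x$ is good, i.e.\ $\operatorname{ord}\phi^*x<0$;
\item[(b)] classes for which $\operatorname{ord}\phi^*x\ge0$. Here $\operatorname{ord}\phi^*y<0$ (the point lies at infinity), so $(y,x)$ is good, and $c:=\phi^*x(p)\in\mathbb C$ is an invariant of the class.
\end{itemize}
I will show that triples at $\infty$ map bijectively onto piece (a), and triples at a finite $c$ map bijectively onto the part of piece (b) with that value of $c$. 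The first step is well-definedness. In the second and fourth lines of \eqref{eq:j-El_corr} the first coordinate is $u^{-p}$, so $x$ is good. In the first line, $\operatorname{ord}y=-(p+q)<0$ while $x=c+u^p$, so after the swap (S) we are in (b). Changing the representative of $[\varphi]$ by an element of $\mathbb C[[u]]$ changes $y$ by $O(u^{p})$ at $\infty$, and changes $x$ modulo the allowed error in case (b) after reparametrising $y=s^{-(p+q)}$. Changing the parameter by $u\mapsto\mu u$ with $\mu^p=1$ is exactly the root-of-unity ambiguity in Definition~\ref{J_def}.

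\textbf{Case $c=\infty$.} Take a class in (a) and a representative with $\phi^*x=s^{-D}$ and $\phi^*y=f(s)$. Define $\varphi$ by $\mathrm d\varphi=f\,\mathrm dx$ modulo $\mathbb C[[s]]$. Only the part of $f$ of order $<D$ matters, since higher terms give $\varphi\in\mathbb C[[s]]$. Conversely, the triple $(s^D,[\varphi],\infty)$ recovers $f$ modulo $O(s^D)$, because differentiation is injective on $s^{-1}\mathbb C[s^{-1}]$ (this also covers the case $y=a$ constant, which corresponds to $\varphi=a/u$). Hence, for a fixed $D$, classes of pairs $(s^{-D},f\bmod O(s^D))$ correspond to triples with $\rho=u^D$. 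Next compare different $D$. The definition of $\sim$ pulls back along $z\mapsto z^{L/D}$, and two such data are equivalent exactly when they have a common pull-back. Every class therefore has a unique representative with minimal $D$, namely the one where the gcd of $D$ and the exponents of $f\bmod O(s^D)$, equivalently of $\varphi$, is $1$. That is precisely the minimal-triple condition, and this gives injectivity and surjectivity. For $[\varphi]=0$ minimality forces $D=1$, recovering the fourth line.

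\textbf{Case $c$ finite (main obstacle).} Take a class in (b) and normalise $\phi^*y=s^{-D}$, $\phi^*x=c+g(s)$, with the class recorded by $g\bmod O(s^D)$ up to $\mu_D$. If $g=O(s^D)$, minimising $D$ gives $(c,s^{-1})$, the third line. Otherwise $\operatorname{ord}g=p<D$. I then reparametrise by $u=(g)^{1/p}$, so that $x=c+u^p$, and must show that $y$ becomes $\varphi'/(pu^{p-1})$ for a class $[\varphi]$ that is well determined. The tool is Lemma~\ref{lem:formal-estimates}. Part~(1) shows that a change of $g$ by $O(s^D)$ changes $u$ by $O(s^{D-p+1})$. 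Part~(2) transfers this to the inverse series. I then check that this shifts $y\,\mathrm dx$ only by a holomorphic form, so $[\varphi]$ is unchanged. This is the same bookkeeping already used in the (S) case of Lemma~\ref{lem:elem_operations}: the exponent counts $p+q=D$ there guarantee that the error $O(u^{\,\cdot})$ lands exactly in $\mathbb C[[u]]$ after integration. The inverse map is the first line of \eqref{eq:j-El_corr} followed by the same normalisation. Minimality is matched, as in the case $c=\infty$, by comparing gcds: the gcd for the triple is $\gcd(p,\text{exponents of }\varphi)$, and the gcd for the jet is $\gcd(D,\text{exponents of }g)$ with $D=p+q$, and these two gcds should agree. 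Verifying this gcd equality, together with the precise error exponents in the reparametrisation, is where I expect the real work to lie.
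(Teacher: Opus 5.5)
Your outline has the right skeleton: split $J^\infty$ according to whether $x$ is good via Corollary~\ref{lem:coord_independence}, read off $[\varphi]$ from $y\,\mathrm dx$, and use Lemma~\ref{lem:formal-estimates} for the reparametrisation at finite points. However, there is a genuine gap in the finite-point case, which you yourself call the main obstacle.

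\textbf{The gcd equality and the error bookkeeping are not done.} You only assert that the two gcds ``should agree'', and you defer the error estimates.

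\textbf{The check you describe there is false as stated.} Changing $g$ by $O(s^D)$ does not change $y\,\mathrm dx$ by a holomorphic form. It changes it by an element of $u^{-1}\mathbb C[[u]]\,\mathrm du$, whose residue can be nonzero. For example, take $y=s^{-D}$ with $x=c+s^p$ and with $x=c+s^p+as^D$, where $p<D$ and $a\ne0$. These are equivalent jets, yet after your normalisation $x=c+u^p$ the residues of the two forms $y\,\mathrm dx$ are $0$ and $aD$. So $[\varphi]$ must be defined, as in the paper, by $y\,\mathrm dx-\mathrm d\varphi\in u^{-1}\mathbb C[[u]]\,\mathrm du$, i.e.\ by integrating only the terms of degree $<-1$. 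The same off-by-one appears at $\infty$: the $s^D$-term of $f$ gives a residue, not an element of $\mathbb C[[s]]$. Your truncation at order $<D$ makes that harmless there.

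\textbf{You never check that \eqref{eq:j-El_corr} yields a jet.} You must use the polynomial representative of $[\varphi]$, and minimality is exactly what gives birationality. Indeed $\mathbb C(x)=\mathbb C(u^p)$, and $u\mapsto\zeta u$ with $\zeta^p=1$ fixes $y$ only if $\zeta^j=1$ for every exponent $j$ of $\varphi$. Without minimality, $(u^2,[u^{-2}],\infty)$ gives $(u^{-2},1)$, a double cover of a line.

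\textbf{The paper's repair.} Do not minimise on the jet side. Attach to every representative a possibly non-minimal triple: $\rho=x-c$ or $1/x$, and $[\varphi]$ as above. In the finite case, pass to the common parameter $z$ with $y_1=y_2=z^{-L}$ and $x_1-x_2=O(z^L)$. Then
\[
y_1\,\mathrm dx_1-y_2\,\mathrm dx_2=z^{-L}\mathrm d(x_1-x_2)\in z^{-1}\mathbb C[[z]]\,\mathrm dz .
\]
Lemma~\ref{lem:formal-estimates}(2) gives $h$ with $\rho_2\circ h=\rho_1$ and $h(z)-z=O(z^{L-e+1})$, where $e=\operatorname{ord}\rho_i<L$. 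Since $\operatorname{ord}\varphi_2=e-L$, this yields $[\varphi_2\circ h]=[\varphi_1]$. Then minimise on the triple side via $v=u^m$. A substitution $u=w^r$ multiplies $p$ and all exponents of $\varphi$ by $r$, so it does not change the minimal triple.

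The only remaining estimate is that the rebuilt jet $(c+u^p,P)$, with $P$ the part of $y$ of degree $<-p$, is equivalent to $(c+u^p,P+Q)$. This follows from $(P+Q)^{-1/D}-P^{-1/D}=O(u^{D-p+1})$, which gives $h_1^p-h_2^p=O(s^D)$ for the inverse series.

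\textbf{If you keep your route.} The gcd equality does hold, by the same lemma. Suppose $m\mid D$ and $m$ divides every exponent of $g\bmod O(s^D)$; in particular $m\mid p$. Then $u=g^{1/p}\in s\,\mathbb C[[s^m]]+O(s^{D-p+1})$. Hence $s\in u\,\mathbb C[[u^m]]+O(u^{D-p+1})$, and so $y=s^{-D}\in u^{-D}\mathbb C[[u^m]]+O(u^{-p})$. Therefore $m$ divides $p$ and every exponent of $\varphi$. The converse is the same computation starting from $s=y^{-1/D}$.

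You would also need to justify, at $\infty$, why the gcd-one representative is unique. Pull two such data back to a common $L$: the exponent sets below $L$ coincide, so their gcd with $L$ equals both $L/D_1$ and $L/D_2$.
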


\begin{proof}
Start with a minimal triple. After choosing a suitable parameter, we may assume that \(\rho=u^p\) and that \(\varphi\) is a polynomial in \(u^{-1}\) with zero constant term.
The formulas~\eqref{eq:j-El_corr} give a rational parametrization in which at least one coordinate has a pole.
If \([\varphi]\ne0\), then \(\mathbb C(x)=\mathbb C(u^p)\).
The transformation \(u\mapsto\zeta u\), with \(\zeta^p=1\), preserves \(y\) if and only if \(\zeta^j=1\) for every degree \(j\) of a nonzero term of \(\varphi\). By minimality, this forces \(\zeta=1\), so
\(\mathbb C(x,y)=\mathbb C(u)\).
If \([\varphi]=0\), minimality gives \(p=1\), and the formulas are birational parametrizations of lines.
Different presentations of the same triple with \(\rho=u^p\) differ by \(u\mapsto\zeta u\), \(\zeta^p=1\), and define the same jet.

We now construct the inverse correspondence. First we recover the data \((\rho,[\varphi],c)\) without imposing minimality, and then choose a new parameter to obtain a minimal triple.

Integrating the terms of the $1$-form \(y\,\mathrm d x\) whose degrees are less than \(-1\), we obtain the unique class \([\varphi]\in\mathbb C((u))/\mathbb C[[u]]\) for which
\[
    y\,\mathrm d x-\mathrm d\varphi
    \in u^{-1}\mathbb C[[u]]\,\mathrm d u.
\]
This condition is preserved under formal changes of parameter and substitutions \(u=w^r\). By Corollary~\ref{lem:coord_independence}, equivalent jets can be compared in the coordinates \((x,y)\) if \(x\) has a pole, and in \((y,x)\) otherwise.

Suppose \(x\) has a pole. Set \(c=\infty\) and \(\rho=1/x\), and choose a parameter with \(x=u^{-p}\).
Then \(\rho=u^p\), and \([\varphi]\) is determined by the terms of \(y\) with degree less than \(p\).
For two equivalent representatives, the substitutions in the definition give
 \[x_1=x_2=z^{-L}, \quad y_1-y_2=O(z^L).\]
The forms \(y_i\,\mathrm d x_i\) differ by an element of \(z^{-1}\mathbb C[[z]]\,\mathrm d z\), so \([\varphi_1]=[\varphi_2]\); the series \(\rho_i=1/x_i\) also agree.
If \([\varphi]=0\), then \(y=O(u^p)\), and the jet is equivalent to that of a line. We associate with it the triple \((u,0,\infty)\).

Now suppose that \(x\) has no pole and \(y\) has a pole of order \(D>0\). Set \(c=x(0)\).
If \(x\equiv c\), we obtain the jet of the line \(x=c\) and the triple
\((u,0,c)\).
Otherwise, set \(\rho=x-c\) and choose a parameter such that \(\rho=u^p\), where \(p=\operatorname{ord}(x-c)>0\).
If \(D\le p\), the form \(y\,\mathrm d x\) has at most a simple pole, so \([\varphi]=0\).
Choose the parameter \(s=y^{-1/D}\). Then \(y=s^{-D}\) and \(x-c=O(s^D)\), so the jet is equivalent to that of the line \(x=c\). Again we take \((u,0,c)\).
If \(D>p\), the class \([\varphi]\) is nonzero.

%Let us check that the construction is independent of the representative. 
We check that the construction is independent of the representative. 
After the substitutions in the definition of equivalence, we have
\(y_1=y_2=z^{-L}\), \(x_1-x_2=O(z^L)\).
The values \(x_i(0)\) agree, and
\[
    y_1\,\mathrm d x_1-y_2\,\mathrm d x_2
    =z^{-L}\,\mathrm d(x_1-x_2)
    \in z^{-1}\mathbb C[[z]]\,\mathrm d z,
\]
which gives \([\varphi_1]=[\varphi_2]\).
If \(\rho_1=x_1-c=O(z^L)\), then \(\rho_2=O(z^L)\) as well, so both representatives give \((u,0,c)\).
Otherwise the \(\rho_i\) have the same order \(e<L\) and the same leading coefficient.
By Lemma~\ref{lem:formal-estimates}, there is an invertible change of parameter \(h\) such that
\[
    \rho_2\circ h=\rho_1,\qquad
    h(z)-z=O(z^{L-e+1}).
\]
Since \(\operatorname{ord}\varphi_2=e-L\), Taylor expansion gives
\(\varphi_2\circ h-\varphi_2 =O(z^{e-L-1+L-e+1})=O(1)\).
Hence \([\varphi_2\circ h]=[\varphi_1]\), and the triples are equivalent.

We now pass to a minimal presentation.
If \(\rho=u^p\) and \(\varphi=\sum_{j<0}b_j u^j\ne0\),
set \(m=\gcd\bigl(p,\{j:b_j\ne0\}\bigr)\).
With \(v=u^m\), we obtain
\[
    \widetilde\rho(v)=v^{p/m},\qquad
    \widetilde\varphi(v)=\sum_{j<0}b_j v^{j/m}.
\]
The powers of a variable $v$ in the above formula now have no common divisor greater than one. 
A substitution \(u=w^r\) multiplies \(p\), all the powers of \(\varphi\), and their greatest common divisor by \(r\), so it leaves the minimal triple unchanged.
Changing the choice of roots changes the minimal presentation only by multiplying the parameter by a root of unity.
Together with the invariance established above, this shows that the jet class determines a minimal triple.

We check that the two constructions are inverse to each other.
For a minimal triple with \([\varphi]\ne0\), the inverse construction recovers \(\rho\) as \(x-c\) or \(1/x\), and \([\varphi]\) from \(y\,\mathrm d x=\mathrm d\varphi\).

It remains to check that reconstructing a jet preserves its class.
If \(x=u^{-p}\), the reconstructed coordinate \(y\) differs from the original by \(O(u^p)\), which gives equivalent jets directly.

At a finite point, with \(D>p\), write \(x=c+u^p\), \(y=P+Q\),
where \(P\) contains the terms of degree \(<-p\) and \(Q=O(u^{-p})\).
The reconstructed pair is \((c+u^p,P)\).
Set
\[
    s_1=(P+Q)^{-1/D},\qquad s_2=P^{-1/D},
\]
with compatible choices of roots, and denote the inverse series by \(h_i\).
Lemma~\ref{lem:formal-estimates} gives
\(s_1-s_2=O(u^{D-p+1})\), whence
\[
    h_1-h_2=O(s^{D-p+1}),\qquad
    h_1^p-h_2^p=O(s^D).
\]
In these parameters both \(y\)-coordinates are \(s^{-D}\), and the difference of the \(x\)-coordinates is \(O(s^D)\). The jets are therefore equivalent.
Together with the cases \([\varphi]=0\) considered above, this proves that the two maps are inverse to each other and establishes the required bijection.
\end{proof}

The group $\operatorname{Aut}(\mathbb A^2)$ acts naturally on $J^\infty$: if $j$ is represented by $(x(u),y(u))$ and $g\in\operatorname{Aut}(\mathbb A^2)$, then $gj$ is represented by $g\left(x(u),y(u)\right)$.
A polynomial automorphism takes polynomial coordinates to polynomial coordinates, so it takes equivalent jets to equivalent jets. The action is therefore well defined.

\subsection{Connections of rank one}\label{sRk=1}
We recall the classification of formal meromorphic connections of rank one and identify the regular ones. We will use this description to construct the auxiliary connections \(H_j\).

Every formal meromorphic connection of rank one is isomorphic to $\big(\mathbb C((t)), \nabla\big)$, where $\nabla$ is determined by its value on $1\in \mathbb C$.

To determine when $(\mathbb C((t))_{1}, \nabla_1)$ and $(\mathbb C((t))_{2}, \nabla_2)$ are isomorphic, set $\nabla_1(1)=\phi_1$ and $\nabla_2(1)=\phi_2$. Denote the unit elements of $\mathbb C((t))_1$ and $\mathbb C((t))_2$ by $e_1$ and $e_2$, respectively.
Suppose there is an isomorphism of the vector spaces $\mathbb C((t))_{1}$ and $\mathbb C((t))_{2}$ sending $e_1$ to $\alpha e_{2}$, where $\alpha\in\mathbb C((t))^\times$.
Then
$$\nabla_1(e_1)=\nabla_2(\alpha e_{2})=\alpha'e_2+\alpha\phi_2e_2.$$
It follows that $\phi_1=\phi_2+\frac{\alpha'}{\alpha}$. Since $\alpha\in\mathbb C((t))^\times$, we have $$\frac{\alpha'}{\alpha}=\frac{n}{t}+\mathbb C[[t]], n\in\mathbb Z,$$ and every expression of this form occurs for some $\alpha$.
Thus the isomorphism classes of formal meromorphic connections of rank one are parametrized by $\mathbb C((t))/(\mathbb C[[t]]+\frac{\mathbb Z}{t}).$
Every isomorphism class has a representative with \begin{equation}\label{Enabla}\nabla(1)=p(t^{-1})+\frac{\lambda}{t},\hspace{10pt}p\in z^2\mathbb C[z], \lambda\in\mathbb C;\end{equation} here $p$ is uniquely determined by the isomorphism class, while $\lambda$ is determined modulo $\mathbb Z$.

A $\mathbb C((t))$-connection of rank one is \emph{regular} if $p=0$ in~\eqref{Enabla}, and \emph{irregular} otherwise. Thus regular rank-one connections are determined by $\lambda\in \mathbb C/\mathbb Z$. In terms of differential equations, their monodromy is multiplication by $\exp{(-2\pi i \lambda)}$.

\begin{definition}
To a one-dimensional formal connection $H=(\mathbb C((t)), \nabla_1)$ we associate its \emph{dual connection} $H^{\vee}=(\mathbb C((t)), \nabla_2)$, defined by $$\nabla_1(1)+\nabla_2(1)=0.$$ In particular, the dual of the trivial connection is trivial.
\end{definition}

\subsection{The decorated space \texorpdfstring{$\widetilde{J}^\infty$}{J-tilde infinity}}
A minimal triple does not retain the regular connection occurring in an elementary block. To include this information and obtain an object that is compatible with the automorphism actions and describes the whole block, we associate with each $j\in J^\infty$ a one-dimensional regular $\mathbb C ((t))$-connection $H_j$.

\begin{definition}
Let $[j]\in J^\infty$. Define $H_j$ from the minimal triple $(\rho, \varphi, c)$ corresponding to $[j]$ by

\begin{equation}
    H_j:=
\begin{cases}
\bigl(\mathbb C((u)),\mathrm d\bigr),
& j\longleftrightarrow(u, 0, c),\quad c\in\mathbb P^1,\\[3pt]
\displaystyle
\left(
\mathbb C((u)),
\mathrm d+\frac12\mathrm d\log\left(\frac{\mathrm d\rho}{\mathrm du}\right)
\right),
& j\longleftrightarrow(\rho,\varphi,c),\quad [\varphi]\ne0.
\end{cases}
\end{equation}
\end{definition}

\begin{lemma}\label{prop:H}
The isomorphism class of $H_j$ does not depend on the representative of the minimal triple $(\rho,\varphi,c)$.
\end{lemma}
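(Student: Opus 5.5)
The plan is to reduce the claim to a computation with rank-one connections, using the classification of \S\ref{sRk=1}: a regular rank-one connection $(\mathbb C((u)),\mathrm d+\omega)$ with $\omega\in u^{-1}\mathbb C[[u]]\,\mathrm du$ is determined up to isomorphism by the residue of $\omega$ modulo $\mathbb Z$. The case $j\leftrightarrow(u,0,c)$ needs nothing, since $H_j$ is then the trivial connection by definition. So assume $[\varphi]\neq 0$. Two representatives of the same minimal triple are related by an invertible change of parameter $h\in u\mathbb C[[u]]$ with $h'(0)\neq0$, via $\rho_1=\rho_2\circ h$. The class $[\varphi]$ plays no role in the formula for $H_j$, so only $\rho$ matters. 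First I will check that the connection is indeed regular. Writing $\rho=u^p g(u)$ with $g\in\mathbb C[[u]]^\times$, we get $\rho'=u^{p-1}\bigl(pg+ug'\bigr)$. Then $\tfrac12\,\mathrm d\log\rho'=\tfrac{p-1}{2}\,\tfrac{\mathrm du}{u}+(\text{holomorphic})$, so $H_j$ is regular with residue $\tfrac{p-1}{2}$.

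The key step is to compare the two connections. Here $H^{(1)}$ is defined on $\mathbb C((u))$ with form $\tfrac12\mathrm d\log\rho_1'(u)$, while $H^{(2)}$ is defined in the parameter $v$ of $\rho_2$, with form $\tfrac12\mathrm d\log\rho_2'(v)$. The identification of the formal fields is $v=h(u)$, exactly as in the equivalence of elementary blocks, where $R\simeq\widetilde R\circ h$. Pulling back along $h$ and using the chain rule $\rho_1'(u)=\rho_2'(h(u))\,h'(u)$, I obtain
\[
\tfrac12\,\mathrm d\log\rho_1'(u)=h^*\Bigl(\tfrac12\,\mathrm d\log\rho_2'(v)\Bigr)+\tfrac12\,\mathrm d\log h'(u).
\]
Since $h'(u)\in\mathbb C[[u]]^\times$, the correction term $\tfrac12\,\mathrm d\log h'$ is holomorphic. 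Gauge transformation by $\alpha=(h')^{1/2}\in\mathbb C[[u]]^\times$ removes it, and such a square root exists because $h'(0)\neq0$. Hence $H^{(1)}\simeq h^*H^{(2)}$.

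It remains to handle the ambiguity of the normal form $\rho=u^p$. Two such normal forms differ by $u\mapsto\zeta u$ with $\zeta^p=1$. This is a special case of the computation above, with $h'=\zeta$ constant, so the two connections agree on the nose. I expect the only delicate point to be the bookkeeping of which parameter each $H_j$ lives in, that is, making precise that "isomorphism class" means up to the reparametrization $h$, as for $R$ in the elementary blocks. Alternatively, one can simply note that both connections have residue $\tfrac{p-1}{2}$ with the same $p=\operatorname{ord}\rho$. By the rank-one classification this already gives the isomorphism, and it shows that the class of $H_j$ is an invariant of the triple, since $p$ is.
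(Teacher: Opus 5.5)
Your proposal is correct and follows essentially the same route as the paper. The chain rule gives $\mathrm d\log(\rho_2\circ h)'=h^*\mathrm d\log\rho_2'+\mathrm d\log h'$, and the holomorphic term $\tfrac12\,\mathrm d\log h'$ does not change the isomorphism class, which is exactly the paper's argument; your alternative via the residue $\tfrac{p-1}{2}$, with $p=\operatorname{ord}\rho$ invariant under $h$, is also valid and is what the paper records immediately afterwards in Remark~\ref{remH}.
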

\begin{proof}
Let $\widetilde\rho=\rho\circ h$, where
$h\in u\mathbb C[[u]]$ and $h'(0)\ne0$. Then
\[
    \mathrm{d}\log(\rho\circ h)'
    =
    \left(\mathrm{d}\log(\rho'\circ h)\right)+\operatorname{d}\log h'.
\]
Since $h'\in\mathbb C[[u]]^\times$, the $1$-form $\frac12\mathrm{d}\log h'$ equals $g(u)\,\mathrm{d}u$ with
$g(u)\in\mathbb C[[u]]$, so it does not change the isomorphism class of the connection. The class of $H_j$ is therefore independent of the representative.
\end{proof}
\begin{remark} \label{remH}
Let $p=\operatorname{ord}\rho$. Then
\(
    \rho'(u)=u^{p-1}a(u),\;
    a(u)\in\mathbb C[[u]]^\times.
\) Hence
\[
    H_j
    \simeq
    \left(
        \mathbb C((u)),
        \mathrm{d}+\frac{p-1}{2}\frac{\mathrm{d}u}{u}
    \right)
    =L_{1-p}
\] by~\eqref{eq:Lm}.
It follows that $H_j^\vee\simeq L_{p-1}$.
\end{remark}

\begin{definition}
Consider pairs
\[
(j,M_{j}),
\]
where \(j\in J^\infty\) and \(M_j\) is a regular formal connection over \(\mathbb C((u))\).

Two pairs $(j,M_j)$ and $(j',M_{j'})$ are equivalent if $j=j'$ in $J^\infty$ and
$M_{j}\simeq
M_{j'}$.
The quotient by this equivalence relation is called the \emph{decorated space of truncated jets of affine plane curves}. We denote it by $\widetilde{J}^\infty$ and write its elements as $(j,M_j)$.
\end{definition}

Define an action of $g\in\operatorname{Aut}(\mathbb A^2)$ on $\widetilde{J}^\infty$ by
\begin{equation}\label{eq:decorated-action}
g\cdot(j,M_j):=(gj,M_j).
\end{equation}

\section{Elementary blocks and elements of \texorpdfstring{$\widetilde{J}^\infty$}{J-tilde infinity}}
We now establish a one-to-one correspondence between elementary blocks and the decorated space of truncated jets of curves. We show that, under this correspondence, the action of Weyl algebra automorphisms agrees with the action of $\operatorname{Aut} \mathbb{A}^2$, and give another description of $H_j$.

\subsection{The correspondence for an elementary block}\label{SS:elbl}
With an elementary block $\operatorname{El}(\rho, \varphi, R)$ we associate the pair $(j,R\otimes H_j)\in\widetilde{J}^\infty$, where $j\in J^\infty$ is given by~\eqref{eq:j-El_corr}. By Proposition~\ref{prop:minimal-triples-J} and the definition of $\widetilde{J}^\infty$, this gives a bijection between isomorphism classes of elementary blocks and elements of $\widetilde{J}^\infty$.

Consider the subgroup
\[
\mathfrak G=
\left\{
(P,Q)\in\operatorname{Aut}\mathbb A^2
\ \middle|\
\det\frac{\partial(P,Q)}{\partial(x,y)}=1
\right\}.
\]
By~\cite[Theorem~2]{Mak84}, there is an isomorphism
\[
\operatorname{Aut}(W_{1,\mathbb C})
\xrightarrow{\sim}
\mathfrak G,
\qquad
\zeta\longmapsto g_\zeta.
\]
The construction of this isomorphism shows that, for \(P\in\mathbb C[x]\) and \(\alpha\in\mathbb C^\times\), the following maps correspond to one another:
\begin{equation}\label{eq:three-weyl-transformations}
\begin{aligned}
\sigma_P&:\quad x\mapsto x,\quad
    \partial_x\mapsto\partial_x+P(x),
&g_{\sigma_P}(x,y)&=(x,y+P(x)),\\
\xi_\alpha&:\quad x\mapsto\alpha x,\quad
    \partial_x\mapsto\alpha^{-1}\partial_x,
&g_{\xi_\alpha}(x,y)&=(\alpha x,\alpha^{-1}y),\\
\mathcal F&:\quad x\mapsto\partial_x,\quad
    \partial_x\mapsto-x,
&g_{\mathcal F}(x,y)&=(y,-x).
\end{aligned}
\end{equation}
These automorphisms generate \(\operatorname{Aut}(W_{1,\mathbb C})\) and \(\mathfrak G\), respectively. Thus it suffices to check compatibility of the actions for the transformations
\eqref{eq:three-weyl-transformations}.

\begin{theorem}\label{T0}
Let \(E=\operatorname{El}(\rho,\varphi,R)\) be an elementary block at \(c\in\mathbb P^1\), with associated pair \((j,R\otimes H_j)\), and let \(a\) be one of the transformations above. Suppose its action on the elementary block is written as \(aE=\widehat E=\operatorname{El}(\widehat\rho,\widehat\varphi,\widehat R)\), with associated pair \((\widehat j,\widehat R\otimes H_{\widehat j})\). Then
    \begin{equation}\label{eq:elementary-covariance}
        \widehat j=g_a j,
        \qquad
        \widehat R\otimes H_{\widehat j}
        \simeq R\otimes H_j.
    \end{equation}
In other words, the action of $\mathfrak G$ on $\widetilde{J}^\infty$ agrees with the action of Weyl algebra automorphisms on elementary blocks.
\end{theorem}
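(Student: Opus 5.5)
The plan is to verify \eqref{eq:elementary-covariance} generator by generator. This suffices because the maps \eqref{eq:three-weyl-transformations} generate both groups, and the correspondence $\zeta\mapsto g_\zeta$ is a homomorphism. For each generator $a$ and each type of block (finite point or $\infty$; regular, slope $<1$, $=1$, $>1$), we read off $(\widehat\rho,\widehat\varphi,\widehat R)$ from the explicit formulas of \S\ref{s2.3}. We then compare the jet attached to it by \eqref{eq:j-El_corr} with $g_a j$. The guiding principle is the relation $\mathrm d\varphi=y\,\mathrm dx$ built into \eqref{eq:j-El_corr}, with $x=c+\rho$ or $x=1/\rho$. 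By the proof of Proposition~\ref{prop:minimal-triples-J}, a jet class is determined by the pair $(x,[\varphi])$ with $y\,\mathrm dx-\mathrm d\varphi\in u^{-1}\mathbb C[[u]]\,\mathrm du$. That proof recovers a jet from any (not necessarily minimal) pair $(\rho,[\varphi])$, so it suffices to check that $\widehat x$ and $\widehat y\,\mathrm d\widehat x-\mathrm d\widehat\varphi$ behave correctly. Minimality of the presentation of $\widehat E$ then never needs to be checked by hand.

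\emph{Fourier transform} $g_{\mathcal F}(x,y)=(y,-x)$. Take a block at finite $c$ with $q>0$ and $\rho=u^p$, so $y=\varphi'/\rho'$. Formula \eqref{eq:fourier-finite-irregular-formulas-final} gives $\widehat\rho=1/y$, so $\widehat x=1/\widehat\rho=y$. It also gives $\widehat\varphi=\varphi-\rho y-cy$, whence $\mathrm d\widehat\varphi=y\,\mathrm d\rho-\mathrm d(\rho y)-c\,\mathrm dy=-(c+\rho)\,\mathrm dy=-x\,\mathrm d\widehat x$. Hence $\widehat j$ is the jet $(y,-x)=g_{\mathcal F}j$. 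The cases $\mathscr F^{(\infty,\infty)}$, $\mathscr F^{(\infty,0)}$ and $\mathscr F^{(\infty,\widehat c)}$ reduce to the same computation with $x=1/\rho$ and $y=-\rho^2\varphi'/\rho'$. For example, $\widehat\rho=-\rho^2\varphi'/\rho'-\widehat c$ gives $\widehat c+\widehat\rho=y$, and $\widehat\varphi=\varphi+\rho\varphi'/\rho'=\varphi-xy$ gives $\mathrm d\widehat\varphi=-x\,\mathrm dy$. The degenerate cases are the regular block at $c$ sent to $\operatorname{El}(u,-c/u,R)_\infty$ (jet $(c,u^{-1})\mapsto(u^{-1},-c)$), \eqref{eq:fourier-slope-one-regular}, and \eqref{eq:fourier-regular-infinity}. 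These are checked directly on the lines in \eqref{eq:j-El_corr}.

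For the decorations we use Remark~\ref{remH}: $H_j\simeq L_{1-p}$ and $L_m\otimes L_n\simeq L_{m+n}$. In the finite-to-$\infty$ case, $\operatorname{ord}\widehat\rho=p+q$ and $\widehat R=R\otimes L_q$, so $\widehat R\otimes H_{\widehat j}\simeq R\otimes L_q\otimes L_{1-p-q}\simeq R\otimes L_{1-p}\simeq R\otimes H_j$. The other slope cases work identically with $\operatorname{ord}\widehat\rho=q-p$, $p-q$, or $\widehat p$ for the slope-one transform. The main obstacle is this slope-one case \eqref{eq:fourier-slope-one-irregular}. There $\widehat p=\operatorname{ord}\widehat\rho$ and $\widehat q$ are not given directly by $p,q$, and one must show that $\widehat q-\widehat p=p-1$, i.e.\ $\widehat q=\widehat p+p-1$. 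I would do this using $\mathrm d\widehat\varphi=-x\,\mathrm d\widehat\rho$ with $\operatorname{ord}x=-p$: this gives $\operatorname{ord}\widehat\varphi'=\widehat p-1-p$, hence $\widehat q=p-\widehat p$. Presumably the sign convention of $L_m$ absorbs this, and the bookkeeping must be matched carefully against the definition of $L_m$. Blocks that become regular need $H=L_0$ trivial, consistent with minimality forcing $p=1$.

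\emph{Triangular and scaling automorphisms.} For $\sigma_P$, \eqref{eq:sigma-finite-final} and \eqref{eq:sigma-infinity-final} replace $\varphi$ by $\varphi+\mathcal P(x)$ with $\rho$ unchanged. Then $\mathrm d(\varphi+\mathcal P(x))=(y+P(x))\,\mathrm dx$, giving the jet $(x,y+P(x))$. At finite $c$ the added term is in $\mathbb C[[u]]$, consistent with $y+P(x)$ and $y$ defining the same jet there. Since $\rho$ and $R$ are unchanged, and an irregular block can become regular only when $p=1$ (Remark~\ref{rem:trans}) where $H=L_0$ is trivial, the decoration is preserved. For $\xi_\alpha$, \eqref{eq:scaling-finite-final} and \eqref{eq:scaling-infinity-final} rescale $\rho$ by $\alpha^{\pm1}$ and move $c\mapsto\alpha c$. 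This gives $x\mapsto\alpha x$, and $y=\varphi'/\rho'$ (resp.\ $-\rho^2\varphi'/\rho'$) scales by $\alpha^{-1}$. The order of $\rho$ and $R$ do not change, so $H_{\widehat j}\simeq H_j$.
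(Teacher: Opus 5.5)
Your route is the paper's: you check generator by generator, using $\mathrm d\varphi=y\,\mathrm dx$ for the jets and $H_j\simeq L_{1-p}$ for the decorations. Your jet computations are correct. Deriving $\mathrm d\widehat\varphi=-x\,\mathrm d\widehat x$ uniformly in all four Fourier cases, and for arbitrary finite $c$, is cleaner than the paper, which checks only $\mathscr F^{(0,\infty)}$ explicitly.

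The gap is in the decorations for the transforms at infinity. Everything there hinges on a fact you never state: $L_m\simeq L_n$ if and only if $m\equiv n\pmod 2$. Indeed $L_m\otimes L_n^\vee=\bigl(\mathbb C((u)),\mathrm d-\tfrac{m-n}{2}\tfrac{\mathrm du}{u}\bigr)$, and by \S\ref{sRk=1} a regular rank-one connection $\mathrm d+\lambda\,\mathrm du/u$ is trivial if and only if $\lambda\in\mathbb Z$. Your finite-to-infinity case cancels exactly. But $\mathscr F^{(\infty,\infty)}$ and $\mathscr F^{(\infty,0)}$ do not ``work identically''. They give
\[
R\otimes L_q\otimes L_{1-(q-p)}=R\otimes L_{1+p}
\quad\text{and}\quad
R\otimes L_q\otimes L_{1-(p-q)}=R\otimes L_{1-p+2q},
\]
which agree with $R\otimes L_{1-p}$ only through this parity criterion.

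In the slope-one case your stated requirement $\widehat q-\widehat p=p-1$ is wrong. What is needed is $L_{\widehat q}\otimes L_{1-\widehat p}\simeq L_{1-p}$, i.e.\ $\widehat q-\widehat p\equiv p\pmod 2$. Your own (correct) computation $\widehat q=p-\widehat p$ differs from your target by the odd number $2\widehat p-1$, so no sign convention reconciles them, and as written the step is left open. With the parity criterion it closes at once: $\widehat q-\widehat p=p-2\widehat p\equiv p$, which is exactly the paper's table entry $\widehat p-p=-\widehat q$. Deducing $\widehat q=p-\widehat p$ from $\ord\widehat\varphi'=\widehat p-1-p$ also needs $\widehat p<p$. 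This holds because $\widehat p>p$ would make $\widehat\varphi'$ holomorphic, and $\widehat p=p$ would give $\widehat\varphi'$ a nonzero residue.

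Finally, your remark that minimality of the presentation of $\widehat E$ never needs checking is right for $\widehat j$ but not for the decoration. Both $H_{\widehat j}\simeq L_{1-\widehat p}$ with $\widehat p=\ord\widehat\rho$, and $\widehat R$ as the regular factor, presuppose a minimal presentation. If $\widehat\rho=\tilde\rho\circ\sigma$ and $[\widehat\varphi]=[\tilde\varphi\circ\sigma]$ with $\deg\sigma>1$, the regular factor would instead be $\sigma_+\widehat R$. The paper also uses this silently. It holds because the local Fourier transform is an equivalence and so preserves irreducibility; apply this with $R$ of rank one.
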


\begin{proof}
\noindent\emph{Triangular transformations.}
Let \(\mathcal P'=P\). By~\eqref{eq:sigma-finite-final} and~\eqref{eq:sigma-infinity-final}, the exponential factor changes to \(\varphi+\mathcal P(x)\), while \(\rho\) and \(R\) stay the same. Since \(\mathrm d\varphi=y\,\mathrm dx\), we have
\[
    \mathrm d\bigl(\varphi+\mathcal P(x)\bigr)
    =
    \bigl(y+P(x)\bigr)\,\mathrm dx.
\]
Thus the new truncated jet is obtained from the original one by \((x,y)\mapsto(x,y+P(x))\), so \(\widehat j=g_{\sigma_P}j\). Since \(\widehat\rho=\rho\), the definition of \(H_j\) gives \(H_{\widehat j}\simeq H_j\). Together with \(\widehat R\simeq R\), this proves
\eqref{eq:elementary-covariance}.

We treat separately the case in which a block at infinity becomes regular after adding \(\mathcal P(1/\rho)\). By Remark~\ref{rem:trans}, this can happen only when the original block corresponds to a pair \((j,R\otimes 1_\mathrm{d})\), where \(1_\mathrm{d}\) is the trivial connection. After the transformation,
\[
    \widehat{\operatorname{El}}
    =
    \operatorname{El}(u ,0,R)_\infty
    \longleftrightarrow
    \bigl([(u^{-1},0)],R\otimes 1_\mathrm{d}\bigr).
\]
On the other hand,
\[
    [g_{\sigma_P}j]
    =
    [(\rho^{-1},O(\rho))]
    =
    [(u^{-1},0)]
\]
in \(J^\infty\), while the connection \(\mathcal M=R \otimes 1_\mathrm{d}\) is unchanged.
Thus~\eqref{eq:elementary-covariance} holds in this case as well.

\smallskip
\noindent\emph{Scaling.}
By~\eqref{eq:scaling-finite-final} and
\eqref{eq:scaling-infinity-final}
we have \(\widehat\rho=\alpha\rho\) at a finite point and
\(\widehat\rho=\alpha^{-1}\rho\) at infinity; in both cases,
\(\widehat R\simeq R\). The equality
\[
    y\,\mathrm dx=(\alpha^{-1}y)\,\mathrm d(\alpha x)
\]
shows that the corresponding truncated jet is \(g_{\xi_\alpha}j\). Multiplying \(\rho\) by a nonzero constant does not change \(\mathrm d\log\rho'\), so \(H_{\widehat j}\simeq H_j\), and~\eqref{eq:elementary-covariance} follows again.

\smallskip
\noindent\emph{The Fourier transform.}
The formulas for the ramification and exponential factors in
\eqref{eq:fourier-finite-irregular-formulas-final},
\eqref{eq:fourier-infinity-greater-one-final},
\eqref{eq:fourier-infinity-less-one-final}, and
\eqref{eq:fourier-slope-one-data} are compatible with the coordinate change \((x,y)\mapsto(y,-x)\). Let us check this for \(\mathscr F^{(0,\infty)}\); the other cases are similar.

The truncated jet associated with an irregular block is
\[
    \operatorname{El}(\rho,\varphi,R)
    \longleftrightarrow
    \left(x=\rho,\quad y=\frac{\varphi'}{\rho'}\right).
\]
After the local Fourier transform, we obtain
\[
    \operatorname{El}
    \left(
        \widehat\rho=\frac{\rho'}{\varphi'},
        \widehat\varphi=\varphi-\frac{\rho}{\rho'}\varphi',
        \widehat R
    \right).
\]
The corresponding truncated jet is
\[
    \left[
        \left(
            \widehat x=\frac{1}{\widehat\rho}
            =\frac{\varphi'}{\rho'},
            \quad
            \widehat y
            =-\widehat\rho^{\,2}
              \frac{\widehat\varphi'}{\widehat\rho'}
            =-\rho
        \right)
    \right].
\]
On the other hand,
\[
    g_{\mathcal F}
    \left(
        x=\rho,\,
        y=\frac{\varphi'}{\rho'}
    \right)
    =
    \left(
        \widehat x=\frac{\varphi'}{\rho'},\,
        \widehat y=-\rho
    \right).
\]
Thus the actions on \(J^\infty\) and on elementary blocks agree.

It remains to check compatibility for the regular connections. Set
$ p=\operatorname{ord}\rho, \
\widehat p=\operatorname{ord}\widehat\rho.$
By Remark~\ref{remH},
\(
    H_j\simeq L_{1-p},
    \
    H_{\widehat j}^{\vee}\simeq L_{\widehat p-1},
\)
and hence
\begin{equation}\label{eq:H-ratio-fourier-clear}
    H_j\otimes H_{\widehat j}^{\vee}
    \simeq
    L_{\widehat p-p}.
\end{equation}
Also,
\(
    \widehat p-p
    =
    \operatorname{ord}\frac{\widehat\rho'}{\rho'}.
\)
Setting \(q=-\operatorname{ord}\varphi\) and using the formulas for \(\operatorname{ord}\widehat\rho\) from~\S\ref{s2.3}, we obtain
\[
\begin{array}{c|c|c}
    \text{original block}
    & \widehat p-p
    & H_j\otimes H_{\widehat j}^{\vee}\\ \hline
    c\in\mathbb C,\ [\varphi]\ne0
    & q
    & L_q\\
    c=\infty,\ q>p
    & q-2p
    & L_q\\
    c=\infty,\ 0<q<p
    & -q
    & L_q\\
    c=\infty,\ q=p,\ [\widehat\varphi]\ne0
    & -\widehat q
    & L_{\widehat q}
\end{array}
\]
Here we have used $L_m\simeq L_n \iff n\equiv m \mod 2$. Thus the resulting regular factors agree with those appearing in
\eqref{eq:fourier-finite-irregular-formulas-final},
\eqref{eq:fourier-infinity-greater-one-final},
\eqref{eq:fourier-infinity-less-one-final},
and~\eqref{eq:fourier-slope-one-irregular}, so~\eqref{eq:elementary-covariance} holds in every irregular case.

If one of the blocks is regular, the check is similar.

\end{proof}
\subsection{A pullback description of \texorpdfstring{\(H_j\)}{Hj}}
There is also an invariant description of the regular connection $H_j$. Set
\[
S_v:=\left(\mathbb C((v)),\mathrm d+\frac12\frac{\mathrm dv}{v}\right),
\qquad
S_v^\vee:=\left(\mathbb C((v)),\mathrm d-\frac12\frac{\mathrm dv}{v}\right).
\]

Let $z=\rho(u)$, and consider
\[
\rho^+S_z\otimes S_u^\vee.
\]
Under $\rho^+$, the form \(\mathrm dz/z\) pulls back to \(\mathrm d\rho/\rho=\mathrm d\log\rho\), so
\[
    \rho^+ S_z\otimes S_u^\vee
    =
    \left(
        \mathbb C((u)),
        \mathrm d+\frac12\mathrm d\log\frac{\rho(u)}{u}
    \right)\simeq H_j.
\]

\subsection{The correspondence for holonomic $W_{1,\mathbb C}$-modules}
We now put the preceding correspondences together.

\begin{definition}
Let \(\mathcal C_{\mathrm{form}}\) be the category whose objects are families
\[
    (N_c)_{c\in\mathbb P^1},
    \qquad
    N_c\in\operatorname{Hol}(\mathscr D_{t_c}^{\star}),
\]
such that $N_c=0$ for all but finitely many $c$. Morphisms are defined componentwise.
\end{definition}
\begin{definition}
Define the \emph{global vanishing cycles functor}
\[
    \Phi_{\mathrm{glob}}\colon
    \operatorname{Hol}(W_{1,\mathbb C})
    \longrightarrow
    \mathcal C_{\mathrm{form}},
    \qquad
    \mathcal M\longmapsto
    \bigl(\Phi(\mathcal M_c)\bigr)_{c\in\mathbb P^1}.
\]
On morphisms it acts componentwise: \(f\mapsto\bigl(\Phi(f_c)\bigr)_{c\in\mathbb P^1}\).
\end{definition}

Let \(\mathcal M\) be a holonomic \(W_{1,\mathbb C}\)-module, and let \(I\subset\mathbb P^1\) be its set of singular points together with $\infty$. For each \(c\in I\), apply the Turrittin--Levelt decomposition:
\begin{equation}\label{eq:module-to-equipped-data}
    \Phi_{\mathrm{glob}}(\mathcal M)
    =
    \bigl(\Phi(\mathcal M_c)\bigr)_{c\in I}=\bigl(\bigoplus_{a\in A_c}
    \operatorname{El}
    (\rho_{c,a},\varphi_{c,a},R_{c,a})_c\bigr)_{c\in I},
\end{equation}
where \(A_c\) is a finite set indexing the elementary blocks in the decomposition of the formal meromorphic connection \(\Phi(\mathcal M_c)\).
Each summand gives an element of $\widetilde J^\infty$:
\[
    \bigl(j_{c,a},\mathcal N_{c,a}\bigr),
    \qquad
    \mathcal N_{c,a}
    :=
    R_{c,a}\otimes H_{j_{c,a}}.
\]
If several summands give the same jet, take the direct sum of the corresponding connections. In this way, \(\mathcal M\) determines a finite family of elements of $\widetilde J^\infty$.

Let \(\mathfrak R\) be the set of isomorphism classes of regular formal connections, modulo invertible changes of the formal variable, and put \(\mathfrak R_0:=\mathfrak R\sqcup\{0\}\). Define the finitely supported function
\begin{equation}\label{eq:nu-equipped}
\widetilde\nu_{\mathcal M}\colon J^\infty\longrightarrow\mathfrak R_0,
\qquad
\widetilde\nu_{\mathcal M}(j)
:=
\left[
\bigoplus_{\substack{c\in I,\ a\in A_c\\j_{c, a}=j}}
\mathcal N_{c, a}
\right].
\end{equation}
If no summand has jet \(j\), set \(\widetilde\nu_{\mathcal M}(j)=0\). The jet \(j\) already determines \(c\), so \(\widetilde\nu_{\mathcal M}\) retains the decomposition of the data by singular point.

\begin{theorem}\label{T1}
The function \(\widetilde\nu_{\mathcal M}\) uniquely determines the family \(\{\Phi(\mathcal M_c)\}_{c\in I}\). In particular, the elements of $\widetilde{J}^\infty$ parametrize elementary blocks at the various points $c\in\mathbb P^1$, up to isomorphism.
\end{theorem}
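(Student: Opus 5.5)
The plan is to reconstruct each $\Phi(\mathcal M_c)$ explicitly from $\widetilde\nu_{\mathcal M}$ and show that the answer does not depend on any choices. The argument rests on three earlier results: the bijection between minimal triples and $J^\infty$ (Proposition~\ref{prop:minimal-triples-J}), the independence of $H_j$ from the choice of representative (Lemma~\ref{prop:H}), and the uniqueness part of the refined Turrittin--Levelt decomposition (Theorem~\ref{th:levtur}).

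\emph{Step 1: recover the point and the minimal triple from a jet.} Fix $j$ with $\widetilde\nu_{\mathcal M}(j)\neq 0$. By Proposition~\ref{prop:minimal-triples-J}, $j$ determines a unique minimal triple $(\rho_j,[\varphi_j],c_j)$ up to reparametrization. In particular it determines the point $c_j\in\mathbb P^1$: this is $\infty$ when $x$ has a pole, and $x(0)$ otherwise. Since $j_{c,a}$ is built from the triple of the block $\operatorname{El}(\rho_{c,a},\varphi_{c,a},R_{c,a})_c$, every summand contributing to $\widetilde\nu_{\mathcal M}(j)$ lives at the point $c_j$ and has minimal data $(\rho_j,[\varphi_j])$, up to a change of parameter $h$.

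\emph{Step 2: regroup blocks with the same jet.} The jet determines $(\rho_j,[\varphi_j])$ up to a change of parameter $h$. By the classification of minimal presentations recalled in \S2.3, the blocks $\operatorname{El}(\rho,\varphi,R)$ and $\operatorname{El}(\rho,\varphi,R')$ with this common data have the property that
\[
\operatorname{El}(\rho,\varphi,R)\oplus\operatorname{El}(\rho,\varphi,R')\simeq\operatorname{El}(\rho,\varphi,R\oplus R').
\]
This holds because $\rho_+$ and $E^\varphi\otimes-$ are additive. Hence the summands with jet $j$ add up to a single block $\operatorname{El}(\rho_j,\varphi_j,\bigoplus_a R_{c,a})$.

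\emph{Step 3: recover the regular part.} Tensor the class $\widetilde\nu_{\mathcal M}(j)=[\bigoplus_a R_{c,a}\otimes H_j]$ with the dual $H_j^\vee$. Since $H_j$ has rank one and $H_j\otimes H_j^\vee$ is trivial, this returns $\bigoplus_a R_{c,a}$ up to isomorphism. Lemma~\ref{prop:H} guarantees that $H_j$ is well defined up to isomorphism, so the result is independent of choices.

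\emph{Step 4: assemble and prove uniqueness.} Set
\[
\Phi(\mathcal M_c)\simeq\bigoplus_{j:\,c_j=c}\operatorname{El}\bigl(\rho_j,\varphi_j,\widetilde\nu_{\mathcal M}(j)\otimes H_j^\vee\bigr).
\]
This is a finite direct sum because $\widetilde\nu_{\mathcal M}$ has finite support. Suppose two modules have the same function $\widetilde\nu$. Then this formula produces isomorphic connections at every point $c$. The decompositions~\eqref{eq:module-to-equipped-data} are unique by Theorem~\ref{th:levtur} (via Krull--Schmidt), so the reconstruction does not depend on the chosen decomposition. The bijection statement in the "in particular" clause is \S\ref{SS:elbl} combined with Step~3.

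\emph{Main obstacle.} The delicate point is the ambiguity ``modulo invertible changes of the formal variable'' in $\mathfrak R$. The class $\widetilde\nu(j)$ only remembers $R\otimes H_j$ up to reparametrization $u\mapsto h(u)$. I need that such a reparametrization, together with the corresponding reparametrization of $(\rho,\varphi)$, does not change the block. This is exactly the criterion $\operatorname{El}(\rho,\varphi,R)\simeq\operatorname{El}(\rho\circ h,\varphi\circ h,R\circ h)$ from the list of minimal presentations. Regular formal connections are moreover determined by their monodromy data, which is invariant under $h$, so $R\circ h\simeq R$ in any case.

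The regular case $[\varphi]=0$ needs separate checking. Here $H_j$ is trivial, so $\widetilde\nu(j)$ gives $\Phi(\mathcal M_{c,\mathrm{reg}})$ directly, and it contributes a single block $\operatorname{El}(u,0,\cdot)_c$.
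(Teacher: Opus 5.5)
Your proposal is correct and follows essentially the same route as the paper's proof: recover the point and minimal triple from $j$ via Proposition~\ref{prop:minimal-triples-J}, recover the regular factor as $\widetilde\nu_{\mathcal M}(j)\otimes H_j^\vee$ (well defined by Lemma~\ref{prop:H}), and reassemble the blocks; your regrouping and reparametrization remarks make explicit what the paper leaves implicit. One small misattribution: the uniqueness in Theorem~\ref{th:levtur} is what makes $\widetilde\nu_{\mathcal M}$ well defined, whereas your reconstruction formula from $\widetilde\nu_{\mathcal M}$ involves no choice of decomposition at all.
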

\begin{proof} For each \(j\), choose a representative \(\mathcal N_j\) of the class \(\widetilde\nu_{\mathcal M}(j)\). The regular factor of the corresponding elementary block is then
\[
R_j\simeq\mathcal N_j\otimes H_j^\vee,
\]
and its minimal triple is recovered from \(j\) by Proposition~\ref{prop:minimal-triples-J}. Conversely, the decomposition~\eqref{eq:module-to-equipped-data} directly determines \(\widetilde\nu_{\mathcal M}\). Thus decomposition into elementary blocks and passage to elements of $\widetilde J^\infty$  are inverse constructions on isomorphism classes of formal local data.
\end{proof}
The construction in Theorem~\ref{T1} does not, in general, recover the global module \(\mathcal M\) itself.

Let the automorphism \(a\in\operatorname{Aut}W_{1,\mathbb C}\) correspond to the polynomial symplectic automorphism
\(g_a\in\operatorname{Aut}\mathbb A^2\).
Its action on finitely supported functions is given by
\[
(g_a\widetilde\nu)(j):=\widetilde\nu(g_a^{-1}j).
\]
The compatibility established for elementary blocks immediately gives the following.
\begin{theorem}\label{T2} We have
\begin{equation}\label{eq:nu-covariance}
\widetilde\nu_{a(\mathcal M)}
=g_a\widetilde\nu_{\mathcal M}.
\end{equation}
\end{theorem}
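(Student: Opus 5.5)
It suffices to check \eqref{eq:nu-covariance} on the generators $\sigma_P$, $\xi_\alpha$, $\mathcal F$ of \eqref{eq:weyl-generators}. Indeed, $\zeta\mapsto g_\zeta$ is a group isomorphism, and both sides of \eqref{eq:nu-covariance} define left actions of $\operatorname{Aut}(W_{1,\mathbb C})$ on functions. On the left the action is $\mathcal M\mapsto a(\mathcal M)$, and on the right it is $\widetilde\nu\mapsto g_a\widetilde\nu$, where $(g_a\widetilde\nu)(j)=\widetilde\nu(g_a^{-1}j)$. If \eqref{eq:nu-covariance} holds for $a$ and $b$, then $\widetilde\nu_{ab\mathcal M}=g_a\widetilde\nu_{b\mathcal M}=g_ag_b\widetilde\nu_{\mathcal M}=g_{ab}\widetilde\nu_{\mathcal M}$. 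The same argument applied to $a^{-1}$ covers inverses.

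For a fixed generator $a$, I will first describe $\Phi_{\mathrm{glob}}(a\mathcal M)$ in terms of the blocks of $\Phi_{\mathrm{glob}}(\mathcal M)$.

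For $\sigma_P$ and $\xi_\alpha$ this follows from \eqref{eq:sigma-Psi-Phi} and \eqref{eq:scaling-Psi-Phi} and their analogues at infinity. These give $\Phi((a\mathcal M)_{a(c)})\simeq a\,\Phi(\mathcal M_c)$ blockwise, with the block formulas \eqref{eq:sigma-finite-final}, \eqref{eq:sigma-infinity-final}, \eqref{eq:scaling-finite-final}, \eqref{eq:scaling-infinity-final}.

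For $\mathcal F$ I use the stationary phase formulas \eqref{eq:Phi-Fourier}. Every block of every $\Phi((\mathcal F\mathcal M)_{c'})$ arises as a local Fourier transform of exactly one block of some $\Phi(\mathcal M_c)$. This relies on the slope decomposition at infinity and on the fact that the local transforms are additive, so each one is applied block by block. Conversely, every block of $\Phi_{\mathrm{glob}}(\mathcal M)$ contributes to exactly one target point. The components $\Phi(\mathcal M_\infty)^{>1}$ and $\Phi(\mathcal M_\infty)^{\le1,(\alpha)}$ together exhaust $\Phi(\mathcal M_\infty)$, and the finite points all go to $\infty$. So in every case the multiset of blocks of $\Phi_{\mathrm{glob}}(a\mathcal M)$ is the image, block by block, of the multiset of blocks of $\Phi_{\mathrm{glob}}(\mathcal M)$.

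Theorem~\ref{T0} now applies to each block. A block with pair $(j,R\otimes H_j)$ goes to a block with pair $(g_aj,R\otimes H_j)$. Summing over blocks with a given jet, $\widetilde\nu_{a\mathcal M}(j')$ is the direct sum of the $\mathcal N$ over blocks whose image jet is $j'$, that is, whose original jet is $g_a^{-1}j'$. This sum equals $\widetilde\nu_{\mathcal M}(g_a^{-1}j')$, because $g_a$ is a bijection on $J^\infty$.

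One subtlety: the transformed blocks need not come in a minimal Turrittin--Levelt form with pairwise nonisomorphic $(\rho_i)_+E^{\varphi_i}$. This does not matter, since $\widetilde\nu$ groups blocks by jet and takes direct sums, and by Theorem~\ref{T1} the grouped data depend only on the isomorphism class.

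The main obstacle is the Fourier step. Its block-level compatibility is guaranteed by Theorem~\ref{T0}, so what remains is bookkeeping at the level of modules. One must confirm that \eqref{eq:Phi-Fourier} loses no data, including regular blocks and $\delta$-type contributions, which $\Phi$ (as opposed to $\Psi$) is designed to retain. One must also check that the regular-block cases of Theorem~\ref{T0}, which were only sketched there, give the correct pairs. These are \eqref{eq:fourier-slope-one-regular}, \eqref{eq:fourier-regular-infinity}, and the transform of a regular block at $c$ to $\operatorname{El}(u,-c/u,R)_\infty$. I would verify these by hand: jets of lines $x=c$ or $y=0$ map under $(x,y)\mapsto(y,-x)$ to the corresponding lines, and $H_j$ is trivial whenever $p=1$.
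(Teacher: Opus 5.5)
Your proposal is correct and is essentially the paper's argument. The paper deduces Theorem~\ref{T2} in one line from the blockwise compatibility of Theorem~\ref{T0}, implicitly using what you spell out: the reduction to the generators \eqref{eq:three-weyl-transformations}, and the fact that, by \eqref{eq:Phi-Fourier} and the formulas of \S\ref{s2.3}, the blocks of $\Phi_{\mathrm{glob}}(a\mathcal M)$ are exactly the images of the blocks of $\Phi_{\mathrm{glob}}(\mathcal M)$.

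One caveat on the reduction step. With the paper's conventions, $\sigma_P\mathcal M\simeq\mathcal M\otimes E^{\mathcal P}$ and the listed $g_{\sigma_P},g_{\xi_\alpha},g_{\mathcal F}$. Under these, the twist $\mathcal M\mapsto a(\mathcal M)$ is a right action, and $a\mapsto g_a$ reverses the order of composition. For instance, $\xi_\alpha\sigma_P\xi_\alpha^{-1}=\sigma_{\alpha P(\alpha x)}$, whereas $g_{\xi_\alpha}g_{\sigma_P}g_{\xi_\alpha}^{-1}=g_{\sigma_{\alpha^{-1}P(\alpha^{-1}x)}}$. So your reduction works because both variances flip together, not because both sides are left actions.
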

In particular, restricting the right-hand side to jets corresponding to triples with \([\varphi]\ne0\) determines the irregular parts of all formal localizations of \(a(\mathcal M)\).

Forgetting the isomorphism class of the regular connection and retaining only its rank gives the finitely supported multiplicity function
\begin{equation}\label{eq:nu-rank}
\nu_{\mathcal M}\colon J^\infty\longrightarrow\mathbb Z_{\ge0},
\qquad
\nu_{\mathcal M}(j)
:=
\operatorname{rk}\widetilde\nu_{\mathcal M}(j),
\end{equation}
where the zero value is assigned rank zero.

\begin{definition}
Two holonomic modules belong to the same \emph{logarithmic family} if their multiplicity functions \(\nu\) agree. If their functions \(\widetilde\nu\) agree, they belong to the same \emph{decorated logarithmic family}.
\end{definition}

\begin{remark}
The function \(\nu\) is the rank version of the decorated invariant \(\widetilde\nu\); it corresponds to the multiplicity function used to describe logarithmic families in~\cite{Kon09}. Thus \(\widetilde\nu\) refines this invariant by retaining the isomorphism class of the regular formal connection as well as its rank.
\end{remark}

\begin{example}
Let us compute $\tilde \nu$ for the module $\mathcal M$ defined by the following generalization of the Airy equation: \begin{equation}
        \partial^ny+a_{n-1}\partial^{n-1}y+\ldots+a_0y-bxy=0,\qquad a_i \in\mathbb C, \ b\in\mathbb{C}^\times. \label{eq:Big_Airy}
\end{equation} This equation is singular only at infinity. Its germ \(\mathcal M_\infty\) is already a formal meromorphic connection, so formalization preserves it up to natural isomorphism.

The inverse Fourier transform $\mathcal F^{-1}\mathcal M$ corresponds to the equation $$(-1)^nx^ny +(-1)^{n-1}a_{n-1}x^{n-1}y+\ldots+a_0y-b\partial y=0.$$ It is also singular only at infinity, and its space of formal solutions there is one-dimensional: $$y=c\cdot\exp\big(\frac{1}{b}\int [(-1)^nx^n +(-1)^{n-1}a_{n-1}x^{n-1}+\ldots+a_0]\mathrm{d}x \big), \ c\in \mathbb C.$$
Thus $\Phi(\mathcal F^{-1}\mathcal M)_\infty$ is described by a single elementary block: \begin{equation}
        \operatorname{El}\big(u, \frac{(-1)^n}{b}\left(\frac{u^{-n-1}}{n+1} -a_{n-1}\frac{u^{-n}}{n}+\ldots+(-1)^na_0u^{-1}\right),\ \left(\mathbb C((u)),\ \mathrm{d}\right)\big).
        \label{eq:El_FAiry}
    \end{equation}

Let $j_\mathcal F\in J^\infty$ be the jet with parametrization
    $$(x=u^{-1}, \ y=\frac{(-1)^n}{b}\left({u^{-n}}-a_{n-1}{u^{-n+1}}+\ldots+(-1)^na_0\right)).$$
Since $1_\mathrm{d}\otimes1_\mathrm{d}\simeq1_\mathrm{d}$ for trivial connections, the elementary block~\eqref{eq:El_FAiry} corresponds to the element of $\widetilde{J}^\infty$ given by $$(j_{\mathcal{ F}},\ 1_\mathrm{d}).$$ The function $\widetilde\nu$ is supported at this one point, where its value is the trivial connection.

Now apply the Fourier transform to return to~\eqref{eq:Big_Airy}. The function $\tilde\nu$ is nonzero only at the jet $j\in J^\infty$ with parametrization $$(x=\frac{(-1)^n}{b}\left({u^{-n}}-a_{n-1}{u^{-n+1}}+\ldots+(-1)^na_0\right),\ y=-u^{-1}),$$ and its value there is the trivial connection.
\end{example}

\appendix
\section{Formal power series and \texorpdfstring{$J^\infty_+$}{J-plus infinity}}
\label{sec:J-infinity-Pre}
Recall that $\mathbb C[[u]]$ is the ring of formal power series in $u$ and $\mathbb C((u))$ is the field of formal Laurent series in $u$, the fraction field of $\mathbb C[[u]]$.
Write $\mathcal G_\infty$ for the set of pairs of formal Laurent series.

For $f\in\mathbb C((u))$, set
$$f(0):=\begin{cases}\text{the coefficient of }u^0,&\text{if }f\in\mathbb C[[u]],\\\infty,&\text{otherwise}.\end{cases}$$
Here $\infty$ is a formal symbol not belonging to $\mathbb C$.

For $f=\sum_{i\in\mathbb Z} f_i u^i$, set $${\rm ord}(f):=\min_i\{i\mid f_i\ne 0\}.$$
Define $$O(f):=\{g\in\mathbb C((u))\mid {\rm ord}(g)\ge{\rm ord}(f)\}.$$

We want to define $J^\infty_+$ as the quotient of $\mathcal G_\infty$ by an equivalence relation $\sim$.
We specify $\sim$ as a subset of $\mathcal G_\infty\times\mathcal G_\infty$, that is, as a relation on $\mathcal G_\infty$. 
Recall that a relation on a set $X$ is simply a subset of $X\times X$.
\begin{definition}
Define $\sim$ as the union of three relations on $\mathcal G_\infty$:
\[\sim_{\mathrm{fin}},\qquad \sim_{x},\qquad \sim_{y}.\]
They are given as follows.

$\sim_{\mathrm{fin}}:$ $(x_1(u),y_1(u))\sim_{\mathrm{fin}}(x_2(u),y_2(u))$ if and only if $$x_1(0)=x_2(0)\ne\infty,\hspace{10pt} y_1(0)=y_2(0)\ne\infty;$$

$\sim_x:$ $(x_1(u),y_1(u))\sim_x(x_2(u),y_2(u))$ if and only if $$x_1(0)=x_2(0)=\infty$$ and there exist $s_1,s_2\in u\mathbb C[[u]]$, $D\in\mathbb Z_{>0}$, and $f\in\mathbb C((u))$ such that
$$x_1(u)=s_1^{-D}, x_2(u)=s_2^{-D},\hspace{10pt} y_1(u)=f(s_1)+O(s_1^D), y_2(u)=f(s_2)+O(s_2^D);$$

$\sim_y:$ the image of $\sim_x$ under the coordinate interchange $(x,y)\mapsto(y,x)$, acting on both factors of $\mathcal G_\infty\times\mathcal G_\infty$.
\end{definition}
Using the Puiseux series $\tilde f(u):=f(u^{1/D})$, one can think of $\sim_x$ as follows: $$y_1(u)=\tilde f(\frac{1}{x_1(u)})+O(\frac{1}{x_1(u)}),\hspace{10pt} y_2(u)=\tilde f(\frac{1}{x_2(u)})+O(\frac{1}{x_2(u)}).$$
The interpretation of $\sim_y$ is similar.

\begin{lemma}\label{L:cass1}
The relation $\sim$ is an equivalence relation.
\end{lemma}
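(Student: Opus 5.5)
The plan is to check reflexivity and symmetry directly, and to reduce transitivity to transitivity inside a single one of the relations $\sim_{\mathrm{fin}}$, $\sim_x$, $\sim_y$. The key extra ingredient is a ``swap'' statement, analogous to case~(S) of Lemma~\ref{lem:elem_operations}.

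\emph{Reflexivity and symmetry.} Symmetry is built into each of the three relations, so $\sim$ is symmetric. For reflexivity, take a pair $(x,y)$ and split into cases.
\begin{enumerate}
\item If $x(0),y(0)\ne\infty$, then $(x,y)\sim_{\mathrm{fin}}(x,y)$.
\item If $x$ has a pole of order $D$, put $s=x^{-1/D}\in u\mathbb C[[u]]$, which has order one. Let $s^{\operatorname{inv}}$ be its compositional inverse and set $f=y\circ s^{\operatorname{inv}}$. Then $x=s^{-D}$ and $y=f(s)$ exactly, so $(x,y)\sim_x(x,y)$.
\item If only $y$ has a pole, apply the previous case to $(y,x)$ and use $\sim_y$.
\end{enumerate}

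\emph{Reformulating $\sim_x$.} Next I would recast $\sim_x$ in a branch-free form. Write $x_i=s_i^{-D}$. Then $(x_1,y_1)\sim_x(x_2,y_2)$ says that the truncations $[y_i\circ s_i^{\operatorname{inv}}]$ modulo $O(v^D)$ coincide as classes in $\mathbb C((v))/v^D\mathbb C[[v]]$. The only freedom here is the choice of $s_i$, which can be changed by a $D$-th root of unity, and the freedom to pass to a multiple $D'=kD$ by writing $s_i=\tilde s_i^{\,k}$.

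Now suppose $(1)\sim_x(2)$ with data $(D,s_1,s_2,f)$ and $(2)\sim_x(3)$ with data $(D',s_2',s_3,f')$. Pass to $N=\operatorname{lcm}(D,D')$ and replace $s_i$ by the corresponding $N$-th roots. Then $s_2$ and $s_2'$ are both $N$-th roots of $1/x_2$, so they differ by a root of unity $\zeta\in\mu_N$. Substituting $v\mapsto\zeta v$ into $f'$ gives a common series $g$ with $y_i=g(\sigma_i)+O(\sigma_i^N)$ for $i=1,3$, where $\sigma_i$ are the adjusted roots. This is the same bookkeeping as in the proof of Lemma~\ref{thm:J_transitivity}, and it shows that $\sim_x$ is transitive. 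By the symmetry $(x,y)\mapsto(y,x)$, $\sim_y$ is transitive too. Transitivity of $\sim_{\mathrm{fin}}$ is trivial.

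\emph{Mixed chains.} It remains to handle chains that mix relations. A pair with $x(0)$ finite and $y(0)$ finite can only be $\sim_{\mathrm{fin}}$-related, and only to pairs of the same kind. So the only problematic mixed chain is $(1)\sim_x(2)\sim_y(3)$ (or the reverse), in which $x_2$ and $y_2$ both have poles. I would prove the following swap claim:
\begin{quote}
If $(1)\sim_x(2)$ and $y_2$ has a pole, then $y_1$ has a pole of the same order and $(1)\sim_y(2)$.
\end{quote}
Granting the claim, the mixed chain becomes $(1)\sim_y(2)\sim_y(3)$, and we conclude by transitivity of $\sim_y$.

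For the pole-order part of the claim, note that $f(s_i)$ has order $\operatorname{ord}y_2<0$ while the error term $O(s_i^D)$ has nonnegative order. Hence $\operatorname{ord}y_1=\operatorname{ord}y_2=-Q$.

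The swap itself repeats case~(S) of Lemma~\ref{lem:elem_operations}.
\begin{enumerate}
\item Work in the common parameter $t$ with $x_i=t^{-D}$ and $A_i(t)=y_i$ satisfying $A_1-A_2=O(t^D)$.
\item Set $w_i=A_i^{-1/Q}$, choosing roots compatibly.
\item Lemma~\ref{lem:formal-estimates}(1) gives $w_1-w_2=O(t^{D+Q+1})$.
\item Lemma~\ref{lem:formal-estimates}(2) transfers this estimate to the inverses $h_i=w_i^{\operatorname{inv}}$.
\item In the new parameter $\tau$ we have $y_i=\tau^{-Q}$ and $x_i=h_i(\tau)^{-D}$, and these $x_i$ differ by $O(\tau^Q)$. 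This is exactly $\sim_y$, taking $s_i=h_i^{-1}$-type parameters with $y_i=s_i^{-Q}$.
\end{enumerate}

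I expect this swap step to be the main obstacle. The difficulty is the exponent bookkeeping in the inversion estimate, and making sure the branch choices in $A_i^{-1/Q}$ are compatible so that a single series $f$ works for both pairs.
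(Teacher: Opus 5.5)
Your architecture matches the paper's: reflexivity and symmetry by inspection, transitivity of $\sim_{\mathrm{fin}},\sim_x,\sim_y$ separately, no chain through $\sim_{\mathrm{fin}}$, and a swap statement (the paper's \eqref{Eari}) for mixed chains. The gap is that you read $s_1,s_2$ as local parameters. The definition only requires $s_i\in u\mathbb C[[u]]$, so $\operatorname{ord}x_i=-D\operatorname{ord}s_i$ can differ between the two pairs. For instance, $(u^{-1},u^{-1})\sim_x(u^{-2},u^{-2})$ via $D=1$, $s_1=u$, $s_2=u^2$, $f(v)=v^{-1}$. So $s_i^{\operatorname{inv}}$ need not exist. (If $\operatorname{ord}s_i=1$ were forced, all $D$'s would equal the common pole order and your lcm step would be empty: you would be proving the lemma for a narrower relation.)

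\emph{Transitivity of $\sim_x$.} Passing to $N=\operatorname{lcm}(D,D')$ works for the middle pair, but not for the outer ones. Take $(u^{-2},0)\sim_x(u^{-6},0)$ with $D=2$ and $(u^{-6},0)\sim_x(u^{-3},0)$ with $D'=3$. Then $u^{-2}$ and $u^{-3}$ have no sixth roots in $\mathbb C((u))$, so your $\sigma_1,\sigma_3$ do not exist, and what you produce is not $\sim_x$ in the sense of the definition.

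The missing idea is the descent the paper signals with \emph{the common leading part is a series in $s^{N/\gcd(D,E)}$}. Put $g=\gcd(D,D')$, $D=ga$, $D'=gb$. From $s_2^{D}=s_2'^{D'}$ one gets $r\in u\mathbb C[[u]]$ with $s_2=r^b$, $s_2'=\xi r^a$, $\xi^{bg}=1$. Comparing the two expansions of $y_2$, and using that $R\mapsto r$ is injective and scales orders, gives $f(R^b)\equiv f'(\xi R^a)$ modulo $R^{gab}$. Hence both equal $T(R^{ab})$ for a single series $T$. Consequently $y_1=T(s_1^a)+O\bigl((s_1^a)^g\bigr)$ and $y_3=T(\zeta s_3^b)+O\bigl((s_3^b)^g\bigr)$ with $\zeta=\xi^{-b}\in\mu_g$. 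This is $(1)\sim_x(3)$ with exponent $g$, which divides both pole orders; $N$ need not.

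\emph{The swap claim.} Your claim that $y_1$ has a pole of the same order is false, since $\operatorname{ord}_u f(s_i)=\operatorname{ord}(f)\operatorname{ord}(s_i)$. In the first example the $y$-poles have orders $1$ and $2$, and no common parameter $t$ with $x_1=x_2=t^{-D}$ exists. What is true, and all you need, is that $y_1$ has a pole and $(1)\sim_y(2)$.

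To prove this, use the shared model $(v^{-D},f(v))$ instead of a common parameter. Put $Q_0:=-\operatorname{ord}f>0$; then $\operatorname{ord}y_i=-Q_0\operatorname{ord}s_i$, so $\tilde s_i:=y_i^{-1/Q_0}\in u\mathbb C[[u]]$ exists. Set $w=f^{-1/Q_0}$ (of order one) and $h=w^{\operatorname{inv}}$, with compatible roots. Lemma~\ref{lem:formal-estimates} gives $\tilde s_i-w(s_i)=O(s_i^{D+Q_0+1})$, hence $h(\tilde s_i)-s_i=O(s_i^{D+Q_0+1})$. It follows that $x_i=s_i^{-D}=h(\tilde s_i)^{-D}+O(\tilde s_i^{Q_0})$. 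This is $(1)\sim_y(2)$ with exponent $Q_0$ and common series $h^{-D}$. The rest of your argument then goes through as in the paper.
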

\begin{proof}

We must verify reflexivity, symmetry, and transitivity. The first two are immediate from the definition, so only transitivity needs checking.

The relation $\sim_{\operatorname{fin}}$ is clearly transitive.
For $\sim_x$, use the calculation in the proof of Lemma~\ref{thm:J_transitivity}: pass to the common exponent $N=\operatorname{lcm}(D,E)$ and observe that the common leading part is a series in $s^{N/\gcd(D,E)}$.
Transitivity of $\sim_y$ follows by the symmetry $(x,y)\mapsto(y,x)$.
A chain in which $\sim_{\operatorname{fin}}$ is adjacent to $\sim_x$ or $\sim_y$ is impossible: the middle element would have to satisfy both $x(0)\ne\infty$ and $x(0)=\infty$, or the corresponding conditions on $y$. This leaves the mixed case $a\sim_x b\sim_y c$; the case $a\sim_y b\sim_x c$ is symmetric. Here $y_b(0)=\infty$, and the argument for case~(S) in Lemma~\ref{lem:elem_operations}, together with the estimates of Lemma~\ref{lem:formal-estimates}, gives
\begin{equation}\label{Eari}
\qquad a\sim_x b,\ y_b(0)=\infty\ \Longrightarrow\ y_a(0)=\infty\ \text{and}\ a\sim_y b,
\end{equation}
so $a\sim_y c$ by transitivity of $\sim_y$. \end{proof}
We define
\[J^{\infty}_+\coloneqq \mathcal{G}_{\infty} / \sim .\]
Thus $J^\infty_+$ is, by definition, the set of equivalence classes of pairs of formal series for this logarithmic relation.

Let $\phi$ be an automorphism of $\mathbb A^2$, that is, a pair of polynomials $f,g\in\mathbb C[x,y]$ with $\mathbb C[x,y]=\mathbb C[f,g]$. Define its action on $\mathcal G_\infty$ by
$$\phi.(x_1(u), y_1(u)):=(f(x_1(u), y_1(u)), g(x_1(u), y_1(u))).$$
This induces a diagonal action on $\mathcal G_\infty\times\mathcal G_\infty$.
\begin{lemma}\label{L:cass2} The action of ${\rm Aut}(\mathbb A^2)$ preserves $\sim$.
\end{lemma}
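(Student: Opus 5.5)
We need to show: if $a\sim b$ in $\mathcal G_\infty$ and $\phi\in\operatorname{Aut}(\mathbb A^2)$, then $\phi.a\sim\phi.b$. By the van der Kulk theorem~\cite{Kul53} $\phi$ factors into affine and elementary triangular automorphisms. It therefore suffices to treat three kinds of generator: the interchange $(x,y)\mapsto(y,x)$, the maps $(x,y)\mapsto(x,ay+F(x))$, and the maps $(x,y)\mapsto(ax+F(y),y)$. Affine maps are handled the same way as in the proof of Lemma~\ref{lem:good-coordinate-chain}: write them as compositions of these generators.

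The interchange is immediate. It swaps $\sim_x$ and $\sim_y$ by definition and clearly preserves $\sim_{\mathrm{fin}}$. Since every generator is a polynomial map, it also preserves $\sim_{\mathrm{fin}}$: if $x_i(0),y_i(0)$ are finite and agree, then $f(x_i,y_i)(0)=f(x_1(0),y_1(0))$ agree as well.

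For $\sim_x$ (and, by symmetry, $\sim_y$), first consider $(x,y)\mapsto(x,ay+F(x))$. With $x_i=s_i^{-D}$ and $y_i=f(s_i)+O(s_i^D)$, the new second coordinate is $\tilde f(s_i)+O(s_i^D)$ with $\tilde f(s)=af(s)+F(s^{-D})$, and the first coordinate is unchanged. This is the (E1) computation of Lemma~\ref{lem:elem_operations}.

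Next consider $(x,y)\mapsto(ax+F(y),y)$, and split according to $\operatorname{ord}$ of the new first coordinate. Whenever the new pair is still related by $\sim_x$, we reparametrize exactly as in case (E2) of Lemma~\ref{lem:elem_operations}.

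If $f\in\mathbb C[[u]]$, the new first coordinate $as_i^{-D}+F(f(s_i)+O(s_i^D))$ has a pole. We set $w_i=cs_i(1+a^{-1}s_i^DF(y_i))^{-1/D}$ with $c^{-D}=a$, invert, and use Lemma~\ref{lem:formal-estimates}. The point to check is that, in the definition of $\sim_x$, we compare each series with the same $f$ but in its own parameter. So we need a common $\tilde f$ for which $y_i=\tilde f(\tau_i)+O(\tau_i^D)$ in the new parameters $\tau_i$. This follows because $w_i$ is the same function $W(s)=cs(1+a^{-1}s^DF(f(s)))^{-1/D}$ of $s_i$ up to $O(s_i^{2D+1})$.

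If $f$ has a pole, we first use~\eqref{Eari} to pass to $\sim_y$. By symmetry this reduces the problem to the triangular case (E1) with the roles of the coordinates exchanged. The new pairs are then $\sim_y$-related, hence $\sim$-related.

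The main obstacle is this mixed situation, where the pole passes from $x$ to $y$. It also includes the degenerate case in which $ax+F(y)$ loses its pole although $f$ is a power series; that cannot happen, since the leading term $as^{-D}$ survives. To prove~\eqref{Eari} I would rewrite each $y_i$ as $s_i^{-Q}$ after the substitution $w_i=y_i^{-1/Q}$. By Lemma~\ref{lem:formal-estimates} the $w_i$ coincide, as a common function of $s_i$, up to order $D+Q+1$. This gives $x_i=h(\tau_i)^{-D}+O(\tau_i^Q)$ for a common $h$, mirroring case (S).
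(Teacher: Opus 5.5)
Your proof is correct and follows essentially the paper's route, mirrored by the interchange $(x,y)\mapsto(y,x)$: the paper uses only $\sigma$ and $T(x,y)=(x,ay+F(x))$ as generators and checks $T$ on $\sim_y$-related pairs, splitting on whether $x_1(0)=\infty$ (reduce to $\sim_x$ via \eqref{Eari}) or not (the (E2)-type reparametrization), whereas you check $(ax+F(y),y)$ on $\sim_x$-related pairs and split on $y(0)$. Your observation that each $w_i$ agrees with a common $W(s_i)$ up to $O(s_i^{2D+1})$ is exactly the paper's step giving $x_i=(f\circ h)(\sigma_i)+O(\sigma_i^D)$ with $h=w^{\mathrm{inv}}$.
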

\begin{proof}

By the Jung--van der Kulk theorem~\cite{Kul53, Jung42}, the group $\operatorname{Aut}(\mathbb A^2)$ is generated by $\sigma(x,y)=(y,x)$ and triangular automorphisms $T(x,y)=(x,ay+F(x))$, where $a\in\mathbb C^\times$ and $F\in\mathbb C[t]$; compare the proof of Lemma~\ref{lem:good-coordinate-chain}. Their inverses have the same form.
Since $\phi.(\psi.p)=(\phi\circ\psi).p$, it is enough to check that $\sigma$ and $T$ preserve $\sim$. For $\sigma$, this follows from the definition of $\sim_y$. The map $T$ preserves $\sim_{\operatorname{fin}}$, since it takes constant terms to
$(x(0),\,ay(0)+F(x(0)))$.
It also preserves $\sim_x$, just as in case~(E1) of Lemma~\ref{lem:elem_operations}: replace $f$ by $af(v)+F(v^{-D})$.
Now suppose $(x_1,y_1)\sim_y(x_2,y_2)$. If $x_1(0)=\infty$, then~\eqref{Eari}, with the coordinates interchanged, makes the pairs $\sim_x$-equivalent, and this case is already settled. If $x_1(0)\ne\infty$, then $x_1,x_2\in\C[[u]]$, and we are in case~(E2), $\ord V_1\ge0$, of the proof of Lemma~\ref{lem:elem_operations}.
Choose $c$ with $c^{-D}=a$ and set
$\sigma_i=c\,s_i\bigl(1+a^{-1}s_i^DF(x_i)\bigr)^{-1/D}$; then
$ay_i+F(x_i)=\sigma_i^{-D}$. Also set
$w(v)=c\,v\bigl(1+a^{-1}v^DF(f(v))\bigr)^{-1/D}$ and $h=w^{inv}$. The estimates and factorization in the proof of Lemma~\ref{lem:elem_operations} give
$x_i=(f\circ h)(\sigma_i)+O(\sigma_i^D)$, so the image pairs are $\sim_y$-equivalent.
\end{proof}
\begin{corollary}
There is a natural action of ${\rm Aut}(\mathbb A^2)$ on $J^\infty_+$.
\end{corollary}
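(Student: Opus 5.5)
The corollary should follow formally from Lemmas~\ref{L:cass1} and~\ref{L:cass2}. By Lemma~\ref{L:cass1}, $J^\infty_+=\mathcal G_\infty/\sim$ is a well-defined set of equivalence classes. For $\phi\in\operatorname{Aut}(\mathbb A^2)$ given by polynomials $(f,g)$, I define the action on classes by
\[
\phi.[(x(u),y(u))]:=[(f(x(u),y(u)),\,g(x(u),y(u)))].
\]
Here $f(x(u),y(u))$ and $g(x(u),y(u))$ are Laurent series, since polynomials in Laurent series are again Laurent series. So the map $\mathcal G_\infty\to\mathcal G_\infty$ is defined before passing to the quotient.

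The first step is to show that this map descends to the quotient. This is exactly Lemma~\ref{L:cass2}: the diagonal action preserves the subset $\sim\subset\mathcal G_\infty\times\mathcal G_\infty$. Hence $a\sim b$ implies $\phi.a\sim\phi.b$, and the class of $\phi.a$ does not depend on the chosen representative.

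The second step is to check the group-action axioms on $\mathcal G_\infty$; they then pass to the quotient. The identity automorphism acts trivially. For compatibility with composition, $\phi.(\psi.p)=(\phi\circ\psi).p$ because substitution of polynomials into series is associative. This identity was already used in the proof of Lemma~\ref{L:cass2}, and with the composition convention fixed there it gives a left action. It follows that each $\phi$ acts bijectively, with inverse given by $\phi^{-1}$.

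Since all the substantive work is contained in Lemma~\ref{L:cass2}, I do not expect any step here to be an obstacle. The word ``natural'' I would interpret as compatibility with the obvious map from jets: a jet $(C,\phi,p)$ yields, after choosing a local parameter at $p$, a pair of Laurent series in $\mathcal G_\infty$. I would remark that this assignment intertwines the action on $J^\infty_+$ with the action of $\operatorname{Aut}(\mathbb A^2)$ on $J^\infty$ described in~\S3. The reason is that both actions are given by applying $g$ to the coordinate series.
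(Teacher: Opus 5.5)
Your proposal is correct and matches how the paper obtains the corollary: Lemma~\ref{L:cass1} makes $J^\infty_+=\mathcal G_\infty/\sim$ well defined, Lemma~\ref{L:cass2} shows the coordinatewise action preserves $\sim$ and so descends to classes, and the action axioms come from $\phi.(\psi.p)=(\phi\circ\psi).p$, an identity the paper also uses. Your closing remark on naturality goes beyond the statement and would additionally require that log-equivalent jets give $\sim$-equivalent pairs, which the paper only asserts as ``easy to see''; the corollary itself does not depend on it.
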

To $(x(u),y(u))\in\mathcal G_\infty$ associate $(x(0),y(0))$. This defines a map
$$\mathcal{G}_\infty\to (\mathbb C\cup\{\infty\})\times(\mathbb C\cup\{\infty\}).$$
For each $(a,b)\in\mathbb C^2$, its fiber is a single $\sim$-equivalence class in $\mathcal G_\infty$. Denote the corresponding element of $J^\infty_+$ by $j_{(a,b)}$.

It is easy to see that $$J^{\infty}_+\cong J^\infty\sqcup_{(a, b)\in\mathbb C^2}j_{(a, b)}\cong J^\infty\sqcup\mathbb A^2.$$
This lets us view $J^\infty_+$ as a kind of `partial compactification' of $\mathbb A^2$, with $J^\infty$ as its divisor at infinity. The phrase is meant informally: the construction is not a compactification in either the algebraic or the topological sense.

\section{The regular part of a formal germ}
\label{app:regular-germ}

Elementary blocks describe formal meromorphic connections and hence determine the purely irregular part
$\mathcal M_{c,\operatorname{irr}}$.
The regular part $\mathcal M_{c,\operatorname{reg}}$ needs a separate description, since its meromorphic localization does not, in general, determine the original germ. We describe a regular germ by a quiver representation and explain the computations of $\Phi$ in~\S\ref{s2.2}.

Fix a finite point $c\in\mathbb C$, set $t=t_c$, and let $N$ be a regular holonomic $\mathscr D_t$-module. The category of such modules is equivalent to the category of finite-dimensional quiver representations $(E,F,u,v)$, where $u\colon E\to F$, $v\colon F\to E$, and $\operatorname{Id}_E+vu$ and $\operatorname{Id}_F+uv$ are invertible~\cite{Zap15}.

\begin{remark}
The categories of formal and analytic germs of regular holonomic $D$-modules are equivalent to each other and to the category of germs of perverse sheaves on a complex disk that are local systems away from the center.
\end{remark}

To construct the representation, use the increasing $V$-filtration. On $\mathscr D_t$ it is defined by
\[
    \sum_{i=0}^{r}a_i(t)\partial_t^i\in V_k\mathscr D_t
    \quad\Longleftrightarrow\quad
    i-\operatorname{ord}_t a_i\leq k
\quad\text{for all }i\text{ with }a_i\ne0.
\]
Set $\Sigma=\{\lambda\in\mathbb C\mid
0\leq\operatorname{Re}\lambda<1\}$, and choose on $N$ the unique good $V$-filtration normalized by
$\operatorname{Spec}(t\partial_t+k\mid\operatorname{gr}^V_kN)
\subset\Sigma-1$, where
$\operatorname{gr}^V_kN=V_kN/V_{k-1}N$.
Then $E=\operatorname{gr}^V_{-1}N$ and $F=\operatorname{gr}^V_0N$. The actions of $\partial_t$ and $t$ induce $a=\partial_t\colon E\to F$ and $b=t\colon F\to E$.
The operators $ba=t\partial_t$ on $E$ and $ab=\partial_t t$ on $F$ have spectra in~$\Sigma$.

The representation $(E,F,a,b)$ determines $N$. Following~\cite[Theorem~3.4]{Zap15}, define the arrows $\operatorname{can}$ and $\operatorname{var}$ in the quiver description by
\begin{equation}
\label{eq:app-quiver-normalization}
\begin{aligned}
    u:=\operatorname{can}&=a,\qquad
    v:=\operatorname{var}=b\,h(ab)=h(ba)\,b,\\
    h(s)&=\sum_{\ell=1}^{\infty}
    \frac{(-2\pi i)^\ell}{\ell!}s^{\ell-1}.
\end{aligned}
\end{equation}
For $s\ne0$, we have $h(s)=(e^{-2\pi i s}-1)/s$, and
$h(0)=-2\pi i$. The identity $s h(s)=e^{-2\pi i s}-1$ gives
\[
    \operatorname{Id}_E+vu=e^{-2\pi i ba},\qquad
    \operatorname{Id}_F+uv=e^{-2\pi i ab}.
\]
In particular, both operators are invertible, so $(E,F,u,v)$ is a representation of the quiver with two vertices $E$ and $F$ and arrows $u\colon E\to F$ and $v\colon F\to E$, subject to the invertibility conditions stated above.

In terms of $(E,F,a,b)$, the connections $\Psi(N)$ and $\Phi(N)$ are
\begin{equation}
\label{eq:app-psi-phi-quiver}
\begin{aligned}
    \Psi(N)&\simeq
    \left(\mathbb C((t))\otimes_{\mathbb C}E,
          \mathrm d+ba\,\frac{\mathrm dt}{t}\right),\\
    \Phi(N)&\simeq
    \left(\mathbb C((t))\otimes_{\mathbb C}F,
          \mathrm d+ab\,\frac{\mathrm dt}{t}\right).
\end{aligned}
\end{equation}
Under these identifications, $u$ and $v$ extend to morphisms of connections
\[
    \operatorname{can}:=1\otimes u\colon
    \Psi(N)\longrightarrow\Phi(N),
    \qquad
    \operatorname{var}:=1\otimes v\colon
    \Phi(N)\longrightarrow\Psi(N).
\]
Indeed, $ab\,u=u\,ba$ and $ba\,v=v\,ab$, so these maps respect the connections in~\eqref{eq:app-psi-phi-quiver}.
For $N=\mathcal M_{c,\operatorname{reg}}$, we therefore obtain the diagram
\begin{equation}
    \Psi(\mathcal M_{c,\operatorname{reg}})
    \xrightarrow{\operatorname{can}}
    \Phi(\mathcal M_{c,\operatorname{reg}})
    \xrightarrow{\operatorname{var}}
    \Psi(\mathcal M_{c,\operatorname{reg}}).
\end{equation}
For the analytic connections given by~\eqref{eq:app-psi-phi-quiver}, the monodromy along a positively oriented loop around $0$ is represented on $E$ and $F$, respectively, by
\[
    T_\Psi=e^{-2\pi i ba}=\operatorname{Id}_E+vu,\qquad
    T_\Phi=e^{-2\pi i ab}=\operatorname{Id}_F+uv.
\]
In other words, in these models the monodromy formulas are
$T_\Psi=\operatorname{id}+\operatorname{var}\circ\operatorname{can}$ and
$T_\Phi=\operatorname{id}+\operatorname{can}\circ\operatorname{var}$.
In particular, if the Jordan form of $ab$ is a single block
of size $n$ with eigenvalue $\mu$, then $\Phi(N)\simeq
\mathscr D_t^\star/\mathscr D_t^\star(t\partial_t-\mu)^n$.
This recovers the table for $\Phi$ in~\S\ref{s2.2} directly from the representations of the indecomposable modules.

All indecomposable regular $\mathscr D_t$-modules are listed below. Every regular holonomic module is a finite direct sum of these.
Here $n\ge1$, $m\ge0$, $\mu\in\Sigma\setminus\{0\}$,
$S_{\mu,n}$ is the Jordan block of size $n$ with eigenvalue $\mu$,
$P_m=(\operatorname{Id}_m,0_{m\times1})$, and
$J_m=\begin{pmatrix}0_{1\times m}\\\operatorname{Id}_m\end{pmatrix}$;
for $m=0$, the corresponding maps are zero.
The last column is computed using~\eqref{eq:app-psi-phi-quiver} and reproduces the table in~\S\ref{s2.2}.

\begin{center}
\begingroup
\renewcommand{\arraystretch}{2.5}
\begin{tabular}{c|c|c}
$N$ & $(E,F,a,b)$ & $\Phi(N)$ \\
\hline
$\displaystyle\frac{\mathscr D_t}
 {\mathscr D_t(t\partial_t-\mu)^n}$
& $(\mathbb C^n,\mathbb C^n,\operatorname{Id}_n,S_{\mu,n})$
& $\displaystyle\frac{\mathscr D_t^\star}
 {\mathscr D_t^\star(t\partial_t-\mu)^n}$ \\
$\displaystyle\frac{\mathscr D_t}{\mathscr D_t(t\partial_t)^n}$
& $(\mathbb C^n,\mathbb C^n,\operatorname{Id}_n,S_{0,n})$
& $\displaystyle\frac{\mathscr D_t^\star}
 {\mathscr D_t^\star(t\partial_t)^n}$ \\
$\displaystyle\frac{\mathscr D_t}{\mathscr D_t(\partial_t t)^n}$
& $(\mathbb C^n,\mathbb C^n,S_{0,n},\operatorname{Id}_n)$
& $\displaystyle\frac{\mathscr D_t^\star}
 {\mathscr D_t^\star(t\partial_t)^n}$ \\
$\displaystyle\frac{\mathscr D_t}
 {\mathscr D_t\bigl((\partial_t t)^m\partial_t\bigr)}$
& $(\mathbb C^{m+1},\mathbb C^m,P_m,J_m)$
& $\displaystyle\frac{\mathscr D_t^\star}
 {\mathscr D_t^\star(t\partial_t)^m}$ \\
$\displaystyle\frac{\mathscr D_t}
 {\mathscr D_t\bigl((t\partial_t)^m t\bigr)}$
& $(\mathbb C^m,\mathbb C^{m+1},J_m,P_m)$
& $\displaystyle\frac{\mathscr D_t^\star}
 {\mathscr D_t^\star(t\partial_t)^{m+1}}$
\end{tabular}
\endgroup
\end{center}

\end{document}